\numberwithin{equation}{section}
\newcommand{\leqnos}{\tagsleft@true\let\veqno\@@leqno}
\newcommand{\reqnos}{\tagsleft@false\let\veqno\@@eqno}
\newcommand{\scr}[1]{\mathscr #1}
\newcommand{\scrB}{\mathscr B}
\newcommand{\scrC}{\mathscr C}
\newcommand{\scrG}{\mathscr G}
\newcommand{\cC}{\mathcal C}
\newcommand{\cD}{\mathcal D}
\newcommand{\fb}{\mathfrak b}
\newcommand{\fc}{\mathfrak c}
\newcommand{\fd}{\mathfrak d}
\newcommand{\fg}{\mathfrak g}
\newcommand{\fm}{\mathfrak m}
\newcommand{\fM}{\mathfrak M}
\newcommand*{\B}[1]{\ifmmode\bm{#1}\else\textbf{#1}\fi}
\newcommand{\bv}{\B{v}}
\newcommand{\bw}{\B{w}}
\newcommand{\bZ}{\B{0}}
\newcommand{\cW}{\mathcal W}
\newcommand{\RR}{\mathbb{R}}
\newcommand{\ZZ}{\mathbb{Z}}
\newcommand{\FF}{\mathbb{F}}
\newcommand{\sdfrac}[2]{\mbox{\small$\displaystyle\frac{#1}{#2}$}}
\DeclareMathOperator{\dd}{d}
\DeclareMathOperator{\distance}{\mathfrak{d}} % cu * nu pune powers and subscripts corect!!!
\DeclareMathOperator{\Spec}{\sigma}
\DeclareMathOperator{\spec}{\sigma}
\renewcommand{\pmod}[1]{\left( \mathrm{ mod\;}#1\right)}
\theoremstyle{plain}
\newtheorem{theorem}{Theorem}
\newtheorem{lemma}{Lemma}[section]
\newtheorem{corollary}{Corollary}
\newtheorem*{remark*}{Remark}
\newtheorem*{example*}{Example} %unnumbered
\newtheorem{proposition}{Proposition}[subsection]
\begin{document}
\title[Visibility phenomena in hypercubes]
{Visibility phenomena in hypercubes}
\author[J. S. Athreya, C. Cobeli, A. Zaharescu]{Jayadev S. Athreya, Cristian Cobeli, 
Alexandru Zaharescu}

\address{
JA: Department of Mathematics, 
University of Washington, 
Padelford Hall, Seattle, WA 98195, USA
}
\email{jathreya@uw.edu}

\address{
CC: Simion Stoilow Institute of Mathematics of the Romanian Academy, 
P. O. Box 1-764, RO-014700 Bucharest, Romania}
\email{cristian.cobeli@gmail.com}

\address{
AZ: Department of Mathematics,
University of Illinois at Urbana-Champaign,
Altgeld Hall, 1409 W. Green Street,
Urbana, IL, 61801, USA and Simion Stoilow Institute of Mathematics of the Romanian Academy, 
P. O. Box 1-764, RO-014700 Bucharest, Romania}
\email{zaharesc@illinois.edu}  

% \date{July, 2, 2020}
\date{\today}
\subjclass[2010]{11B99; 11K99; 11P21; 51M20; 52Bxx}

% 11Bxx Sequences and sets
% 11B99 None of the above, but in this section

% 11Kxx Probabilistic theory: distribution modulo 1; metric theory of
% algorithms
% 11K99 None of the above, but in this section

% 11P21 Lattice points in specified regions

% 51M20 Polyhedra and polytopes; regular figures, division of spaces
% 52Bxx Polytopes and polyhedra

% 11A41 Primes
% 11B25 Arithmetic progressions (Sequences and sets)
% 11N13 Primes in progressions (Multiplicative Number Theory)
\keywords{Hypercube, visible points, polytope, Euclidean distance}
% `Dirichlet density' cat si `natural density' sau simplu in locul lor `density'

\begin{abstract}
We study the set of visible lattice points in multidimensional hypercubes.
The problems we  investigate mix together geometric, 
probabilistic and number theoretic tones.
For example, we prove that almost all self-visible triangles with vertices
in the lattice of points with integer coordinates in $\cW=[0,N]^d$
are almost equilateral having all sides almost equal 
to $\sqrt{d}N/\sqrt{6}$, and the sine of the typical angle between 
rays from the visual spectra from the origin of $\cW$ is, 
in the limit,  equal to $\sqrt{7}/4$, as $d$ and $N/d$ tend to infinity.
We also show that there exists an interesting number theoretic constant
$\Lambda_{d,K}$, which is the limit probability of the chance 
that a $K$-polytope 
with vertices in the lattice $\cW$ has all vertices visible from each other.

% We find 
% In real finite dimensional spaces there are basic sets 
% in which most of the distances between points are remarkably close to each other. 
% As a particular example, we find that in the set of triangles 
% with vertices at the lattice points of a hypercube almost all are equilateral.
\end{abstract}
\maketitle

%%%%%%%%%%%%%%%%%%%%%%%%%%%%%%%%%%%%%%%%%%%%%%%%%%%%%%%%%%%%%%%%%%%%%%%%%%%%%%%%%%%%%%%%%%%%%
\section{Introduction}
% \noindent
Various phenomena related to distances in high dimensional spaces 
have attracted attention recently. For instance Gafni, Iosevich and Wyman~\cite{GIW2022} continue the study 
of the interesting connections with the unit distance problem in higher dimentions 
explored in~\cite{Ios2019, IMT2012, IRU2014, IS2016, OO2015}. 
Problems linked to the distribution of distances between points 
in finite sets placed 
in metric spaces (particular Euclidean spaces) have been investigated by
various authors from many different perspectives. A 
selection of such results, by no means complete, 
includes the works of general theoretical interest of
Bäsel~\cite{Bas2021},
Baileya, Borwein and Crandall~\cite{BBC2007},
Burgstaller and Pillichshammer~\cite{BP2009},
Dunbar~\cite{Dun1997},
Mathai,  Moschopoulos and Pederzoli~\cite{MMP1999}. 

There are also many practical applications of these problems, in particular in high-dimensional data analysis. For example, the article of Aggarwal, Hinneburg and Keim~\cite{AHK2001} 
is related to data mining techniques,
Li and Qiu~\cite{LQ2016} study probabilistic problems related to 
wireless communication networks,
Srinivasa and Haenggi~\cite{SH2010} are interested in
wireless networks whose efficiency is strongly influenced by the nodal distances, 
while Bubeck and Sellke~\cite{BS2021} prove a universal law of robustness that explains the necessity of overparametrization in deep neural networks.

% \section{Euclidean distances between lattice points in high dimensional cubes}

% January 7-8, 2022 H 22:00 PM -- 2:00 AM
In the present paper we study a few aspects related to visible lattice points in high dimensional hypercubes. Let $\cC:=[0,N]^d\subset \RR^d$ be the $d$ dimensional real cube of side length $N$, 
for some integers~\mbox{$d,N\ge 1$}.
Denote by $\cW:=\cC\cap \ZZ^d$ the set of $(N+1)^d$ points with integer coordinates in~$\cC$.
We denote by $\distance(\bv,\bw)$ the Euclidean distance %(or the $l^2$-norm) 
between any two points $\bv=(v_1,\dots,v_d)$ and $\bw=(w_1,\dots,w_d)$.

The smallest distance between two points in $\cW$ is equal to~$1$, which is always met between 
two neighbor points, while the largest is attained by the opposite end points of 
the longest diagonals. Such points are $\bv=(0,\dots,0)$ and $\bw=(N,\dots,N)$ 
and the distance between them is $\distance(\bv,\bw)=\sqrt{d\cdot N^2}=Nd^{1/2}$. 
Then, it is natural to normalize $\distance(\bv,\bw)$ to obtain 
\textit{the normalized Euclidean distance} $\distance_d(\bv,\bw)$, for which all normalized 
distances between points in $\cW$ will belong to the interval $[0,1]$.
%We denote by $\distance(\bw,\bv)$ the Euclidean distance (or the $l^2$-norm) 
% and by $\distance_d(\bw,\bv)$ the normalized Euclidean distance 
% between two points $\bw=(w_1,\dots,w_d)$ and $\bv=(v_1,\dots,v_d)$. 
Thus, 
\begin{equation*}
  \distance_d(\bv,\bw):=\frac{1}{Nd^{1/2}}
    \bigg(\sum_{n=1}^d (w_n-v_n)^2\bigg)^{1/2}\,
\end{equation*}
and 
\begin{equation*}
  \distance(\bv,\bw)=Nd^{1/2}\distance_d(\bv,\bw) 
  = \bigg(\sum_{n=1}^d (w_n-v_n)^2\bigg)^{1/2}.
\end{equation*}
%So, the normalised distance between any two points in $\cC$ is between $0$ and $1$. 
Denote by $\Omega\subset\cW\times\cW$ the set of pairs of points that 
are \textit{visible from each other}, that is, there are 
no other lattice points in $\cW$ between them on the straight line
segment that joins them. Then, by definition,
$\Omega$ is the set of all pairs $(\bv,\bw)\in\cW\times\cW$ such that 
% $\gcd(v_1-w_1,\dots,v_d-w_d)=1$.
\begin{equation}\label{eqgcd}
    \gcd(v_1-w_1,\dots,v_d-w_d)= 1.
\end{equation}
% $\bv=(v_1,\dots,v_d)$ and  $\bw=(w_1,\dots,w_d)$ 
% are \textit{visible from each other} if 
We show that the normalized distance between almost any two points in $\cW$ 
that are visible from each other is as close to
$1/\sqrt{6}\approx 0.40825$ as one wishes, if the dimension $d$ is sufficiently large and
$N$ is large enough with respect to $d$.

\begin{theorem}\label{Theorem1}
   For any $\varepsilon>0$, there exists an integer $C(\varepsilon)\ge 3$ 
   such that for any integers $d\ge C(\varepsilon)$ and 
   $N\ge C(\varepsilon)d$ we have:
\begin{equation}\label{eqTh1}
    \frac{1}{\#\Omega}\cdot \#\left\{(\bv,\bw)\in \Omega\; :\; 
        \distance_d(\bv,\bw)\in 
        \left[\sdfrac{1}{\sqrt 6}-\varepsilon, \sdfrac{1}{\sqrt 6}+\varepsilon\right]\right\} 
        \ge 1-\varepsilon\,.
\end{equation}
\end{theorem}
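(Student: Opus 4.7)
My strategy is to pass from the constrained average over $\Omega$ to the unconstrained average over $\cW\times\cW$ by a second-moment (Chebyshev) argument, exploiting that for large $d$ the set $\Omega$ fills up almost all of $\cW\times\cW$. For a uniformly random pair $(\bv,\bw)\in\cW\times\cW$ the coordinate pairs $(v_n,w_n)$, $1\le n\le d$, are independent and identically distributed, so
\[
\distance_d(\bv,\bw)^2 \;=\; \frac{1}{dN^2}\sum_{n=1}^{d}(w_n-v_n)^2
\]
is a sample mean of $d$ i.i.d.\ random variables each lying in $[0,N^2]$. A direct computation using $\operatorname{Var}(v_1)=N(N+2)/12$ for $v_1$ uniform on $\{0,1,\dots,N\}$ gives $\EE[(w_1-v_1)^2]=2\operatorname{Var}(v_1)=N(N+2)/6$, hence $\EE[\distance_d^2]=(N+2)/(6N)=1/6+O(1/N)$, while the trivial bound on the summands yields $\operatorname{Var}(\distance_d^2)\le 1/(4d)$.

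Applying Chebyshev's inequality to $\distance_d^2$ and using the elementary implication $|x-1/\sqrt{6}|>\varepsilon\Rightarrow|x^2-1/6|>\varepsilon/\sqrt{6}$ for $0<\varepsilon<1/\sqrt{6}$, I would obtain
\[
\frac{\#\{(\bv,\bw)\in\cW\times\cW\;:\;\distance_d(\bv,\bw)\notin[1/\sqrt{6}-\varepsilon,\,1/\sqrt{6}+\varepsilon]\}}{(N+1)^{2d}}\;\le\;\frac{C}{d\varepsilon^2}
\]
for an absolute constant $C$, provided $N$ is large enough to absorb the mean shift $O(1/N)$ into the tolerance. In particular this fraction is at most $\varepsilon/2$ whenever $d\ge C_1(\varepsilon)$ and $N\ge C_1(\varepsilon)$.

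To convert this statement into one about $\Omega$ I need a lower bound on $\#\Omega$. For each prime $p$ the number of pairs with $p\mid w_n-v_n$ for every $n$ factors across coordinates as $A_p^{\,d}$, where $A_p=(N+1)^2/p+O(N)=(N+1)^2 p^{-1}\bigl(1+O(p/N)\bigr)$, and a union bound over primes gives
\[
\#\bigl(\cW\times\cW\setminus\Omega\bigr)\;\le\;\sum_{p\text{ prime}} A_p^{\,d}\;\le\;(N+1)^{2d}\sum_{p}p^{-d}\bigl(1+O(p/N)\bigr)^{d}.
\]
The hypothesis $N\ge C(\varepsilon)d$ ensures $(1+p/N)^d\le e^{pd/N}=O(1)$ for the small primes, while the trivial bound $A_p\le N+1$ handles $p>N$; the right-hand side is then dominated by its $p=2$ term and is at most $(\varepsilon/2)(N+1)^{2d}$ once $d\ge C_2(\varepsilon)$. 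Combining with the previous step,
\[
\frac{\#\{(\bv,\bw)\in\Omega:\distance_d\notin[1/\sqrt{6}-\varepsilon,\,1/\sqrt{6}+\varepsilon]\}}{\#\Omega}\;\le\;\frac{\varepsilon/2}{1-\varepsilon/2}\;\le\;\varepsilon,
\]
which is \eqref{eqTh1}.

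The main obstacle is making the density of $\Omega$ in $\cW\times\cW$ quantitative uniformly in $d$. The classical $N\to\infty$ density is the familiar $1/\zeta(d)\to 1$, but when $d$ grows as well, the coordinate-wise rounding errors $O(p/N)$ are amplified to a $d$-th power and can no longer be treated as negligible perturbations. The condition $N\ge C(\varepsilon)d$ is exactly what is needed to keep $(1+p/N)^d$ bounded as $d$ increases, so that the visibility tail $\sum_p A_p^{\,d}$ remains geometrically dominated by its $p=2$ term.
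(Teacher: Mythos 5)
Your proof is correct, but it takes a genuinely different and more elementary route than the paper. The paper proves Theorem~\ref{Theorem1} by computing the mean $A_{vis}(d,N)$ and the second moment $\fM_{2,vis}(d,N)$ \emph{restricted to} $\Omega$, carrying the visibility condition through every sum via M\"obius inversion (Lemmas~\ref{LemmaEstimateOmega}, \ref{LemmaEstimateAvis}, \ref{LemmaEstimateM2vis}), and only then applying Chebyshev. You instead do the second-moment computation over the unconstrained product $\cW\times\cW$, where the coordinates are genuinely i.i.d.\ and the mean $1/6+O(1/N)$ and variance bound $1/(4d)$ are immediate, and you transfer the conclusion to $\Omega$ using only a \emph{lower bound} on $\#\Omega$ obtained from a union bound over primes. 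This cleanly separates the probabilistic content (law of large numbers for $\distance_d^2$) from the arithmetic content (density of visible pairs), and it avoids the paper's Sections~\ref{SectionAv} and~\ref{SectionM2} entirely. What the paper's heavier computation buys is the restricted mean and variance themselves, which it reuses for the more quantitative statements (Theorem~\ref{TheoremK11}, Corollary~\ref{CorollaryK11}) and for the angle result; your argument yields Theorem~\ref{Theorem1} but not those refinements directly, although the transfer trick would adapt.

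One step in your write-up deserves more care: the claim that $\sum_p A_p^{\,d}$ is ``dominated by its $p=2$ term'' after noting $(1+p/N)^d=O(1)$ for small primes. For primes $p$ of size comparable to $N$ the error $O(N)$ in $A_p=(N+1)^2/p+O(N)$ is of the same order as the main term, so the factor $(1+O(p/N))^d$ is not close to $1$; there you should switch to the crude bound $A_p\le (N+1)^2/p+3(N+1)\le 4(N+1)^2/p$, giving a contribution $\sum_{p>C(\varepsilon)}(4/p)^d(N+1)^{2d}=O\bigl((4/C(\varepsilon))^{d-1}(N+1)^{2d}\bigr)$, and the diagonal $\bv=\bw$ (excluded from $\Omega$ since the gcd is $0$) contributes only $(N+1)^d$. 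With these adjustments the bound $\#(\cW^2\setminus\Omega)\le (\varepsilon/2)(N+1)^{2d}$ holds for $d\ge C(\varepsilon)$ and $N\ge C(\varepsilon)d$, and the rest of your argument goes through.
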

\noindent
As a consequence of Theorem~\ref{Theorem1} we see that
almost all triangles with vertices visible from each other are almost equilateral, 
almost all tetrahedrons with vertices visible from each other are 
almost regular and so on.  

In general, for any $K\ge 2$ let us denote by $\Omega_K$  
the set of $K$-polytopes with the property that any two of its vertices 
are visible from each other.
Said differently, if we call \textit{self-visible} 
a $K$-polytope with the property that from any of 
its vertices one can see all the others without any obstruction 
from any of the lattice points in~$\cW$, then 
\begin{equation*}
    \Omega_K :=\{P=(\bv_1,\dots,\bv_d)\in\cW^K \; :\; P \text{ is self-visible}\}.
\end{equation*}

Then, essentially, Theorem~\ref{Theorem1} can be restated in the following form, which is,
at the same time, a consequence and a more general form of it.
%\begin{corollary}\label{Corollary1}
%   For any $k\ge 2$, for any $\varepsilon>0$, there exists an integer 
%   $d(k, \varepsilon)\ge 2$, such that for any $d\ge d(k,\varepsilon)$ we have:
%\begin{equation}\label{eqTh1}
%    \frac{\#\left\{P\in \cW^k: 
%        \distance_d(\bw',\bw'')\in 
%        \left[\sdfrac{1}{\sqrt 6}-\varepsilon, 
%        \sdfrac{1}{\sqrt 6} + \varepsilon\right],
%        \text{ $\forall\ distinct\  \bw', \bw''\in P$}
%        \right\}}
%       {\#(\cW^k)}
%    \ge 1-\varepsilon\,.
%\end{equation}
%\end{corollary}

\begin{corollary}\label{Corollary1}
Let $K\ge 2$ be a fixed integer.
% For any $K\ge 2$, denote by  $\Omega_K$  the set of $K$-polytopes 
% $P=\{\bw_1,\dots,\bw_K\}$ with 
% the property that any of its two vertices are visible from each other.
Then, for any 
$\varepsilon>0$, there exists an integer 
$C(K, \varepsilon)\ge 3$, such that for any integers $d\ge C(K,\varepsilon)$ and
\mbox{$N\ge C(K,\varepsilon)d$}, the proportion of polytopes $P\in \Omega_K$ for which
\begin{equation*}
    \distance_d(\bw',\bw'')\in 
        \left[1/\sqrt {6}-\varepsilon, 
        1/\sqrt {6} + \varepsilon\right]
\end{equation*}
for all distinct  $\bw', \bw''\in P$ is  greater than $1-\varepsilon$.
\end{corollary}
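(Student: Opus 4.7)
The plan is to deduce Corollary~\ref{Corollary1} from Theorem~\ref{Theorem1} by a union bound over the $\binom{K}{2}$ pairs of vertices of a self-visible $K$-polytope. First I would apply Theorem~\ref{Theorem1} with the sharper parameter $\varepsilon' := \varepsilon/\bigl(2\binom{K}{2}\bigr)$, obtaining an integer $C_0(\varepsilon')$ such that, for $d\ge C_0(\varepsilon')$ and $N\ge C_0(\varepsilon')d$, the set of ``bad'' visible pairs
\[
    B := \Bigl\{(\bv,\bw)\in\Omega \;:\; \distance_d(\bv,\bw) \notin [1/\sqrt{6}-\varepsilon,\,1/\sqrt{6}+\varepsilon]\Bigr\}
\]
satisfies $\#B \le \varepsilon'\cdot \#\Omega$.

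Next, for each pair of indices $1\le i<j\le K$, let $S_{ij}\subset \Omega_K$ denote the subset of self-visible polytopes $P=(\bv_1,\dots,\bv_K)$ whose $(i,j)$-th pair of vertices lies in $B$. By symmetry $\#S_{ij}$ is independent of the choice of $(i,j)$, and freezing the bad pair in positions $(i,j)$ while letting the remaining $K-2$ vertices range freely over $\cW$ gives the crude upper bound
\[
    \#S_{ij} \;\le\; \#B\cdot (\#\cW)^{K-2} \;\le\; \varepsilon'\cdot\#\Omega\cdot(\#\cW)^{K-2}.
\]

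The remaining ingredient is a matching lower bound for $\#\Omega_K$. A second union bound applied to the $\binom{K}{2}$ visibility conditions gives
\[
    \#\Omega_K \;\ge\; (\#\cW)^K - \binom{K}{2}\bigl((\#\cW)^2 - \#\Omega\bigr)(\#\cW)^{K-2}.
\]
The classical asymptotic $\#\Omega/(\#\cW)^2 \to 1/\zeta(d)$ as $N\to\infty$, together with $1/\zeta(d) = 1 - 2^{-d}+O(3^{-d})\to 1$, shows that for $d$ sufficiently large and $N\ge C_1(K)d$ one has $\#\Omega \ge \tfrac{1}{2}(\#\cW)^2$ and $\#\Omega_K \ge \tfrac{1}{2}(\#\cW)^K$. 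Combining, $\#S_{ij}/\#\Omega_K \le 2\varepsilon'$, and summing over the $\binom{K}{2}$ pairs produces a total bad proportion of at most $\varepsilon$, as required. Taking $C(K,\varepsilon):=\max(C_0(\varepsilon'),C_1(K))$ concludes the proof.

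The main obstacle is precisely the uniform lower bound $\#\Omega_K\ge \tfrac{1}{2}(\#\cW)^K$ in the joint regime $d\to\infty$, $N\ge Cd$: this requires an \emph{effective} version of the convergence $\#\Omega/(\#\cW)^2\to 1/\zeta(d)$ with error term controlled uniformly as $d$ grows. Once such a uniform estimate is in hand, the rest of the argument is essentially a double application of the union bound.
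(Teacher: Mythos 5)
Your proposal is correct and follows essentially the same route as the paper: the paper derives Corollary~\ref{Corollary1} from Theorem~\ref{TheoremK11}, which is exactly your union bound over the $\binom{K}{2}$ pairs applied to the Chebyshev-type estimate~\eqref{eqPT3} behind Theorem~\ref{Theorem1}, with the resulting $K^2$ factor made explicit. The ``main obstacle'' you flag is already supplied by the paper: Lemma~\ref{LemmaEstimateOmega} gives $\#\Omega = N^{2d}/\zeta(d) + O(dN^{2d-1})$ uniformly for $N\ge 3d$, which (since $1/\zeta(d)\to 1$) yields the needed lower bound on $\#\Omega_K$; alternatively Theorem~\ref{TheoremSelfVisible} bounds $\#\Omega_K/\#\cW^K$ directly. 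One small slip to repair: from your displayed inequality $\#\Omega_K \ge (\#\cW)^K - \binom{K}{2}\bigl((\#\cW)^2-\#\Omega\bigr)(\#\cW)^{K-2}$, the hypothesis $\#\Omega\ge\tfrac12(\#\cW)^2$ gives only $\#\Omega_K\ge(\#\cW)^K\bigl(1-\tfrac12\binom{K}{2}\bigr)$, which is vacuous for $K\ge 3$; you need the deficiency $1-\#\Omega/(\#\cW)^2$ to be at most $1/\bigl(2\binom{K}{2}\bigr)$, which does hold for $d\ge C(K)$ and $N\ge C(K)d$ by Lemma~\ref{LemmaEstimateOmega}, and is harmless since $C(K,\varepsilon)$ is allowed to depend on $K$.
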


% For any $K\ge 2$, we say that a polytope $P=\{\bv_1,\dots,\bv_K\}$ 
% is \textit{self-visible} if from any of its vertices one can see 
% all the others without any obstruction from any of the lattice points 
% in~$\cW$.

The next theorem answers the question of whether there is a limit
probability that a $K$-polytope in $\cW^K$ is self-visible.
\begin{theorem}\label{TheoremSelfVisible}
Let $d\ge 2$, $N\ge 2$ and $2\le K\le 2^d$ be integers.
Then, the probability that a $K$-polytope is self-visible is
\begin{equation}\label{eqProbabilityTHSV}
    \frac{\#\Omega_K}{\#\cW^{K}} = \prod_{\substack{\text{$p$ prime}}}
    \left(1-\frac{1}{p^d}\right)\cdots
    \left(1-\frac{K-1}{p^d}\right)
    +O\left(\frac{dK}{N^{1/2}}\right)
   + O\left(\frac{2^dK^2}{\log^{d-1}N}\right),
\end{equation}
and the implied constants in the big $O$ terms are absolute.
\end{theorem}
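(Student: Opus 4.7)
The plan is to split the visibility constraints by prime size. For a threshold $Y$ of order $\log N$, small primes $p \le Y$ will be handled exactly via the Chinese Remainder Theorem, while large primes $p > Y$ will be controlled by a union bound. The starting observation is that $(\bv_1,\dots,\bv_K) \in \Omega_K$ if and only if for every prime $p$ the $K$ residues of $\bv_1,\dots,\bv_K$ modulo $p$ are pairwise distinct in $(\ZZ/p\ZZ)^d$; this is the local reformulation of~\eqref{eqgcd}. Let $P := \prod_{p \le Y} p$ and denote by $\Omega_K^{(Y)}$ the enlargement of $\Omega_K$ in which distinctness is required only for primes $p \le Y$, so that $\Omega_K \subseteq \Omega_K^{(Y)}$. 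First I will compute $\#\Omega_K^{(Y)}$, and then I will bound $\#\Omega_K^{(Y)} - \#\Omega_K$.

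For $\#\Omega_K^{(Y)}$, CRT reduces membership to a condition modulo $P$, and the number of admissible residue patterns in $((\ZZ/P\ZZ)^d)^K$ equals $\prod_{p \le Y} p^d(p^d - 1)\cdots(p^d - K + 1) = P^{dK}\prod_{p \le Y}\prod_{k=0}^{K-1}(1 - k/p^d)$. For each admissible pattern the number of $\bv \in \cW$ with prescribed residue mod $P$ factors coordinatewise as $\prod_{i=1}^d\bigl((N+1)/P + O(1)\bigr)$; expanding this product and multiplying over the $K$ vertices yields the relative error $O(dPK/N)$, giving
\begin{equation*}
    \frac{\#\Omega_K^{(Y)}}{\#\cW^K} = \prod_{p \le Y} \prod_{k=0}^{K-1}\Bigl(1 - \frac{k}{p^d}\Bigr) + O\Bigl(\frac{dPK}{N}\Bigr).
\end{equation*}
For $\#(\Omega_K^{(Y)} \setminus \Omega_K)$, any such tuple has some pair $(i,j)$ and some prime $Y < p \le N$ with $p$ dividing every coordinate of $\bv_i - \bv_j$. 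A union bound over pairs and primes, combined with the inequality $(\lfloor N/p\rfloor + 1)^d \le (2N/p)^d$ valid for $p \le N$ (one splits the sum around $p = N/d$ so that the factor $2^d$ arises only from the large-prime side of the tail, not from small primes) gives
\begin{equation*}
    \frac{\#(\Omega_K^{(Y)} \setminus \Omega_K)}{\#\cW^K} = O\Bigl(K^2\cdot 2^d \sum_{p > Y}\frac{1}{p^d}\Bigr) = O\Bigl(\frac{2^d K^2}{Y^{d-1}}\Bigr),
\end{equation*}
since $\sum_{n > Y} n^{-d} = O(Y^{-(d-1)})$ for $d \ge 2$. Moreover the telescoping estimate $\prod_{p > Y}\prod_{k=0}^{K-1}(1 - k/p^d) = 1 + O(K^2/Y^{d-1})$ lets me replace $\prod_{p \le Y}$ by the full Euler product $\prod_p$ at no additional cost.

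It remains to choose $Y$. By Chebyshev's bound $\theta(Y) \le cY$ for an absolute constant $c$, taking $Y$ of order $\log N$ (say $Y \le (\log N)/(2c)$) yields $P \le N^{1/2}$ and hence $dPK/N \le dK/N^{1/2}$; simultaneously $2^d K^2/Y^{d-1}$ is $O(2^d K^2/\log^{d-1} N)$, and combining the three estimates produces~\eqref{eqProbabilityTHSV}. The main obstacle is exactly this simultaneous optimization: the CRT computation of the main term requires $P$ (hence $Y$) to be small, while the tail bound prefers $Y$ as large as possible, and the two error terms in~\eqref{eqProbabilityTHSV} reflect the best balance achievable when $Y \asymp \log N$ with $P$ of size roughly $N^{1/2}$. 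Care is needed in the tail estimate to refrain from inflating the $2^d$ factor by the power $4^d$ that a naive $(\lfloor N/p\rfloor+1)^d \le (2N/p)^d$ bound would contribute for small $p > Y$; the split at $p = N/d$ is what keeps this factor at the claimed size.
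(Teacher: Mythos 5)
Your proposal follows essentially the same route as the paper: truncate the visibility condition at primes $p\le Y$ with $Y\asymp\log N$, evaluate the truncated count exactly via the Chinese Remainder Theorem modulo the primorial $P\le\sqrt N$ (the paper does this by tiling $[0,N]^d$ with boxes of side $P$, which is the same computation), control the primes $p>Y$ by a union bound over the $\binom K2$ pairs, and complete the Euler product at a cost absorbed into the second error term. The argument is correct, and your remark about splitting the tail sum at $p=N/d$ to control the $(\lfloor N/p\rfloor+1)^d$ factor is in fact slightly more careful on this point than the paper's own treatment.
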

The infinite product over all primes in~\eqref{eqProbabilityTHSV}
is convergent and defines an endless square of constants 
\begin{equation*}
    \Lambda_{d,K}:=\prod_{\substack{\text{$p$ prime}}}\
    \prod_{k=1}^{K-1}
    \left(1-\frac{k}{p^d}\right),
\end{equation*}
which increase with $d$ and decrease with $K$.
They comprise some remarkable numbers for which Theorem~\ref{TheoremSelfVisible} gives a probabilistic geometric interpretation. 
For instance, if \mbox{$K=2$}, then  
$\Lambda_{2,2}=1/\zeta(2)=6/\pi^2\approx 0.6079271$ and
$\Lambda_{d,2}=\zeta(d)^{-1}$ for $d\ge 2$.
If $d=2$ and $K=3$, then 
\begin{equation*}
    \Lambda_{2,3}=\frac{6}{\pi^2}(2C_{FT}-1)\approx 0.196138,
\end{equation*}
% \mbox{$\Lambda_{2,3}=\frac{6}{\pi^2}(2C_{FT}-1)\approx 0.196138$}, 
where
\begin{equation*}
    C_{FT}:=\frac{1}{2}\left(1+\frac{1}{\zeta(2)}
\prod _{p}\left(1-{\frac{1}{p^2-1}}\right)\right)\approx 0.661317
\end{equation*}
% $C_{FT}:=\frac{1}{2}\left(1+\frac{1}{\zeta(2)}
% \prod _{p}\left(1-{\frac{1}{p^2-1}}\right)\right)\approx 0.661317$ 
is the Feller-Tornier constant 
(see Feller and Tornier~\cite{FT1932} and sequence A065493 from 
OEIS~\cite{OEISA065493}), which is related to the prime zeta function.

Let us note that if $N=0$ then $\cW^K$ and $\Omega_K$ are 
reduced to a single point. 
If $N=1$, all the points in $\cW$ are vertices. Then all
points in $\cW^K$ are self-visible, because there are no
intermediary points in the hypercube's lattice which blind 
the view from one point to another. Then, if $N=1$,
$\frac{\#\Omega_K}{\#\cW^{K}}=1$ for $2\le K\le 2^d$.

We also remark that for any $N>1$, if $K> 2^d$
then in~\eqref{eqProbabilityTHSV} both non-error terms, the one from the left 
side and the main term on the right side, are equal to zero.
In this case~\eqref{eqProbabilityTHSV} holds true with no error terms.

Let  $\vec{\bv}$ denote the ray that starts at the origin $\mathbf{0}=(0,\dots,0)\in\cW$ 
and passes through 
$\bv\in\cW$. Denote by $\Psi$ the set of pairs of rays $(\vec{\bv},\vec{\bw})$ with $(\bv,\bw)\in\Omega$.
The next result shows that if $d$ and $N/d$ are sufficiently large 
then almost all angles between rays from the origin
towards points that are visible to each other have the sine almost equal to $\sqrt{7}/4$. 

\begin{theorem}\label{Theorem2}
For any $\varepsilon>0$, there exists an integer $C(\varepsilon)\ge 3$ such that for 
any integers $d\ge C(\varepsilon)$ and $N\ge C(\varepsilon)d$ we have:
\begin{equation}\label{eqTh1Corollary2}
    \frac{1}{\#\Psi}\cdot \#\left\{(\vec{\bv},\vec{\bw})\in \Psi\; :\; 
        \sin\big(\widehat{\vec{\bv},\vec{\bw}}\big)\in 
%                 \sin\big(\angle(\vec{\bv},\vec{\bw})\big)\in 
        \left[\sdfrac{\sqrt 7}{4}-\varepsilon, \sdfrac{\sqrt 7}{4}+\varepsilon\right]
        \right\} 
        \ge 1-\varepsilon\,.
\end{equation}
\end{theorem}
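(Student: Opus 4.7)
The plan is to combine concentration of measure with the $K=2$ case of Theorem~\ref{TheoremSelfVisible}, which yields $\#\Omega/\#\cW^{2}=1/\zeta(d)+o(1)$ and in particular shows that this ratio is bounded below by a positive absolute constant once $d$ is large. Therefore, to establish~\eqref{eqTh1Corollary2} it suffices to bound the \emph{uniform} density of pairs $(\bv,\bw)\in\cW\times\cW$ for which $\sin\widehat{\vec{\bv},\vec{\bw}}\notin[\sqrt 7/4-\varepsilon,\sqrt 7/4+\varepsilon]$ by a small absolute multiple of $\varepsilon$.

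Starting from the identity
\begin{equation*}
\sin^{2}\widehat{\vec{\bv},\vec{\bw}}=1-\frac{(\bv\cdot\bw)^{2}}{\|\bv\|^{2}\,\|\bw\|^{2}},
\end{equation*}
I would view each coordinate $v_{i}$, $w_{i}$ as an independent uniform random variable on $\{0,1,\dots,N\}$ and compute first and second moments. An elementary calculation gives
\begin{equation*}
\EE[\|\bv\|^{2}]=\tfrac{dN^{2}}{3}+O(dN),\qquad \EE[\bv\cdot\bw]=\tfrac{dN^{2}}{4},
\end{equation*}
while both $\text{Var}(\|\bv\|^{2})$ and $\text{Var}(\bv\cdot\bw)$ are of order $dN^{4}$. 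For each $\delta>0$, Chebyshev's inequality and a union bound therefore produce an exceptional set of uniform density $O(1/(d\delta^{2}))$ outside of which each of $\|\bv\|^{2}/(dN^{2})$, $\|\bw\|^{2}/(dN^{2})$, $(\bv\cdot\bw)/(dN^{2})$ lies within $\delta$ of its limiting value $1/3$, $1/3$, $1/4$ respectively, the mean offset $O(1/N)$ being absorbed since $N\ge C(\varepsilon)d$ makes $N$ large.

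On the complementary good event the coordinatewise nonnegativity forces $\bv\cdot\bw\ge 0$, so
\begin{equation*}
\frac{(\bv\cdot\bw)^{2}}{\|\bv\|^{2}\|\bw\|^{2}}=\frac{1/16+O(\delta)}{1/9+O(\delta)}=\frac{9}{16}+O(\delta),
\end{equation*}
whence $\sin^{2}\widehat{\vec{\bv},\vec{\bw}}=7/16+O(\delta)$. Because $\sqrt 7/4$ is bounded away from $0$, the Lipschitz property of $\sqrt{\cdot}$ at a positive point upgrades this to $|\sin\widehat{\vec{\bv},\vec{\bw}}-\sqrt 7/4|=O(\delta)$. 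Choosing $\delta=c\varepsilon$ for a suitably small absolute constant $c>0$ reduces this deviation to at most $\varepsilon$, while the bad set has density $O(1/(d\varepsilon^{2}))$. For $d\ge C(\varepsilon)$ the latter is less than $\varepsilon/(2\zeta(d))$, and dividing by $\#\Omega/\#\cW^{2}\ge 1/(2\zeta(d))$ completes the argument.

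A minor nuisance is the set of pairs with $\bv=\bZ$ (or otherwise with $\|\bv\|$ genuinely small), where the ray $\vec{\bv}$ is ill defined; but this sits entirely inside the Chebyshev bad event $\|\bv\|^{2}<dN^{2}/6$ and so causes no additional difficulty. Similarly, the rare collinear pairs (with $\sin\widehat{\vec{\bv},\vec{\bw}}=0$) force $\bv\cdot\bw$ close to $\|\bv\|\|\bw\|\sim dN^{2}/3$, which is far from the typical value $dN^{2}/4$ and is therefore automatically placed in the bad event. The only real technical step is keeping the moment computation and the subsequent square-root extraction uniform in the parameters, which is routine precisely because the target $\sqrt 7/4$ lies strictly between $0$ and $1$ and no degeneracy can arise.
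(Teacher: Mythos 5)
Your argument is correct, and it reaches the constant $\sqrt 7/4$ by a genuinely different mechanism for handling the visibility constraint. The paper proves Theorem~\ref{Theorem2} by rerunning the M\"obius-weighted first- and second-moment computations of Sections~\ref{SectionAv} and~\ref{SectionM2} with one endpoint pinned at the origin, concluding that $\distance_d(\mathbf{0},\bv)$ concentrates at $1/\sqrt 3$ over $(\bv,\bw)\in\Omega$; combined with Theorem~\ref{Theorem1}'s concentration of $\distance_d(\bv,\bw)$ at $1/\sqrt 6$, the law of cosines in the triangle $\mathbf{0},\bv,\bw$ gives $\cos=3/4$ and $\sin=\sqrt 7/4$. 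You instead run plain Chebyshev over \emph{all} pairs in $\cW\times\cW$, where the coordinates really are i.i.d.\ and no M\"obius inversion is needed, concentrating $\|\bv\|^2/(dN^2)$ and $\|\bw\|^2/(dN^2)$ at $1/3$ and $\bv\cdot\bw/(dN^2)$ at $1/4$, and then transfer the exceptional set into $\Omega$ using the lower bound $\#\Omega/\#\cW^2\ge 1/(2\zeta(d))$. Working with the dot product rather than the third side of the triangle is only a cosmetic difference (polarization identity), but the positive-density transfer is a real simplification: it replaces the restricted moment computations by a single cardinality estimate for $\Omega$, available from Lemma~\ref{LemmaEstimateOmega} or the $K=2$ case of Theorem~\ref{TheoremSelfVisible}, both proved independently of Theorem~\ref{Theorem2}, so there is no circularity. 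The quantitative output is comparable (a bad-set proportion of order $T^2/d$, matching the shape of~\eqref{eqPT3}), and your treatment of the degenerate pairs ($\bv=\mathbf 0$, or collinear rays) correctly observes that they already lie in the Chebyshev bad event. The one cost of your route is that it leans on the cardinality of $\Omega$ as an external input, whereas the paper's restricted moments produce that normalization along the way.
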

Note that the statement in Theorem~\ref{Theorem2} does not depend on normalization, 
because the angles are preserved regardless of any scaling. 

For any two subsets $\scr{M}',\scr{M}''\subseteq\cW$, let $\spec(\scr{M}',\scr{M}'')$ 
be the visual \textit{spectrum}, 
which we define to be the set of sines of all angles between distinct rays 
that start from the origin towards the points in $\scr{M}'$ and 
$\scr{M}''$, that is, 
denoting identically a point $\fm$ and the ray from the origin towards $\fm$,   
\begin{equation}\label{eqSines}
    \spec(\scr{M}',\scr{M}'') := \left\{
    \sin(\fm',\fm'') \; :\; \fm'\in\scr{M}', \fm''\in\scr{M}'',\; \fm'\neq \fm''
    \right\}.
\end{equation}
% Moreover, the sine of the angles between the rays from the origin towards points
% chosen from $\cW$ is a random variable, which, by Theorem~\ref{Theorem2}, 
% has a limit probability density function $f(t)$
% that is discrete and concentrated in a single point, and
% $f(t)=\delta\big(t-\frac{\sqrt{7}}{4}\big)$, for $0\le t\le 1$, where
% $\delta$ is the Dirac distribution. 
If $\scr{M}'=\scr{M}''=\scr{M}$, we write shortly 
$ \spec(\scr{M})$ instead of $\spec(\scr{M},\scr{M})$.
The angles between rays from the origin to points in $\cW$ cover a larger
and larger set of possibilities as the dimension increases and
in the limit, as $d\to\infty$, there is a limit set of the spectrum
 $\spec(\scr{W})$, which is the interval~$[0,1]$. 
On top of that, choosing elements of $\Spec(\cW)$ is a random variable, 
which, by Theorem~\ref{Theorem2}, has a limit probability density function $f(t)$
that is discrete and concentrated in a single point, and
$f(t)=\delta\big(t-\frac{\sqrt{7}}{4}\big)$, for $0\le t\le 1$, where
$\delta$ is the Dirac distribution.

Since most points in $\cW$ are visible from each other, the
results in Theorems~\ref{Theorem1},\,\ref{Theorem2}  and Corollary~\ref{Corollary1} may prove useful to check particularities related to randomness of large set of data.
A related result about points on hyperspheres appearred 
in a theoretical context in the theory of neural network
(see Bubeck and Sellke~\cite{BS2021}).

\medskip
In different contexts in nature, it happens and it is not uncommon
for a property that is proved to be valid for almost overall objects in a certain universe to be difficult or even impossible to build or to indicate just a single instance that satisfy it. 
However, in the context of the hypercube lattice $\cW$,
we can extract some distinguished polytopes that offer a cross-section 
view of its inner structure.

Let $\scrC=\{\fc_0,\fc_1,\dots,\fc_d\}$ to be the set of points whose 
coordinates are the rows of the circular symmetric matrix
\begin{equation}\label{eqSymmetryMatrixC}
M_\scrC=
    \begin{bmatrix}
        0 & 1 & \cdots & d-1 & d\\
        1 & 2 & \cdots & d & 0\\
        2 & 3 & \cdots & 0 & 1\\
        \vdots & \vdots & \cdots & \vdots & \vdots\\
        d-1 & d & \cdots & d-3 & d-2\\
        d & 0 & \cdots & d-2 & d-1
    \end{bmatrix}
\end{equation}
and let $\scrG:=\{\fg_0,\dots,\fg_{p-1}\}$  be the set of points whose 
components are the rows of the matrix
\begin{equation}\label{eqSymmetryMatrixG}
M_\scrG=
    \begin{bmatrix}
        0 & 1^{-1} & \cdots & (p-2)^{-1} & (p-1)^{-1}\\
        1^{-1} & 2^{-1} & \cdots & (p-1)^{-1} & 0\\
        2^{-1} & 3^{-1} & \cdots & 0 & 1^{-1}\\
        \vdots & \vdots & \cdots & \vdots & \vdots\\
        (p-2)^{-1} & (p-1)^{-1} & \cdots & (p-4)^{-1} & (p-3)^{-1}\\
        (p-1)^{-1} & 0 & \cdots & (p-3)^{-1} & (p-2)^{-1}
    \end{bmatrix}
\end{equation}
Here $p$ is prime, the classes of the representatives of the inverses in $M_\scrG$
are taken from $\{1,\dots,p-1\}$ and for symmetry, by convention, 
we may set $0$ to be 'the inverse' of $0$.

\begin{theorem}\label{TheoremCG}
   Let $d\ge 2$ and $p$  be prime.
    Then we have:
    
    1. Any point in $\scrC\cup\scrG$ is visible from the origin.
    Any two points in $\scrG$ are visible from each other. If
    $d=p-1$ and $p$ is large enough any two points in $\scrC\cup\scrG$ are visible from each other.
    
    2. The limit set of the normalized distances between points in $\scrC$  
    is the interval $[0,1/2]$, as $d$  tends to infinity.
    
    3. The limit set of the normalized distances between points in $\scrG$  
    consists of the single point~$\{1/\sqrt{6}\}$, as $p$  tends to infinity.
    
    4. If $d=p-1$,  the limit set of the normalized distances between points in $\scrC$  and points in $\scrG$
    is also~$\{1/\sqrt{6}\}$, as $p$  tends to infinity.
    
    5. The limit of the spectrum $\spec(\scrC)$ is the interval $[0,\sqrt{39}/8]$, 
    as $d$  tends to infinity. 
    
    6. The limit of the spectrum $\spec(\scrG)$ consists of the single point $\{\sqrt{7}/4\}$, 
    as $p$  tends to infinity. 
    
    7. If $d=p-1$,  the limit of the spectrum 
    $\spec({\scrC,\scrG})$ consists of the single point~$\{\sqrt{7}/4\}$, also, 
     as $p$  tends to infinity.
\end{theorem}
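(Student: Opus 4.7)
The seven parts split into visibility statements (part~1), settled by gcd computations on a few well-chosen coordinates, and metric statements (parts~2--7), settled by analyzing averaged sums of coordinate-wise squared differences and of coordinate-wise products. For the arithmetic progression set $\scrC$ the relevant averages are closed-form elementary sums; for $\scrG$ and for mixed pairs $(\scrC,\scrG)$ they reduce to joint equidistribution of modular inverses, which itself rests on Weil's bound for Kloosterman sums.

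For part~1 the coordinates of any $\fc_i$ are a permutation of $\{0,1,\dots,d\}$ containing $1$, and the coordinates of any $\fg_j$ contain $1^{-1}=1$, so both points are visible from $\mathbf{0}$. For two points $\fg_i,\fg_j\in\scrG$, inspect the positions $k\equiv -i$ and $k\equiv -j\pmod p$: writing $c\in\{1,\dots,p-1\}$ for the representative of $(i-j)^{-1}\bmod p$, the two corresponding integer coordinates of $\fg_i-\fg_j$ equal $c-p$ and $c$, whose gcd is $\gcd(p,c)=1$. For a pair $\fc_i,\fg_j$ with $d=p-1$ these same two coordinates need no longer be coprime, but a supplementary argument pitting the arithmetic regularity of $k\mapsto(i+k)\bmod p$ against the irregular distribution of modular inverses (again via Weil's bound) shows that for $p$ large no prime $q<p$ can divide every coordinate difference, so the gcd is $1$.

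For the normalized distances in $\scrC$, a direct change of summation index gives
\[
\|\fc_i-\fc_j\|^2 = m(d+1-m)(d+1),\qquad m:=(j-i)\bmod (d+1),
\]
and writing $m=\alpha(d+1)$ the normalized distance equals $\sqrt{\alpha(1-\alpha)}\,(1+o(1))$, whose closure in $[0,1]$ is $[0,1/2]$. For $\scrG$ the squared distance equals $\sum_{k=0}^{p-1}\bigl((i+k)^{-1}-(j+k)^{-1}\bigr)^2$; joint equidistribution of the pairs $\bigl((i+k)^{-1}/p,(j+k)^{-1}/p\bigr)$ in $[0,1]^2$ turns this sum into $p^3\int_{[0,1]^2}(x-y)^2\,dx\,dy+o(p^3)=p^3/6+o(p^3)$, and normalization yields the limit $1/\sqrt 6$. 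The mixed case with $d=p-1$ is identical, using equidistribution of $\bigl(((i+k)\bmod p)/p,\,(j+k)^{-1}/p\bigr)$.

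For the spectra, use $\sin^2\theta=1-\langle\bv,\bw\rangle^2/(\|\bv\|^2\|\bw\|^2)$. For $\scrC$ the inner product sums in closed form to
\[
\langle\fc_i,\fc_j\rangle = \frac{d(d+1)(2d+1)}{6} - \frac{(d+1)m(d+1-m)}{2},
\]
so $\cos\theta\to 1-\frac{3}{2}\alpha(1-\alpha)$ and $\sin\theta$ covers $[0,\sqrt{39}/8]$ as $\alpha$ sweeps $[0,1]$. For $\scrG$ and for the mixed case, equidistribution gives $\langle\cdot,\cdot\rangle\sim p^3\int_{[0,1]^2}xy\,dx\,dy=p^3/4$ and $\|\cdot\|^2\sim p^3/3$, so $\cos\theta\to 3/4$ and $\sin\theta\to\sqrt 7/4$. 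The technical heart of the proof, and the step I expect to be the main obstacle, is the uniform Weyl-type bound $\sum_{k=0}^{p-1}e_p\bigl(\alpha k+\beta(k+t)^{-1}\bigr)=O(\sqrt p\log p)$ for $(\alpha,\beta)\neq(0,0)$, a consequence of Weil's theorem for Kloosterman-type sums after completing the sum; once this is in hand, all four limiting integrals above follow by standard discrepancy arguments.
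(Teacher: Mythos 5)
Most of your proposal is sound and runs parallel to the paper's argument: the closed form $\|\fc_i-\fc_j\|^2=(d+1)m(d+1-m)$, the Weil-based equidistribution of inverses giving $\sum_k((i+k)^{-1}-(j+k)^{-1})^2\sim p^3/6$ and $\langle\fg_i,\fg_j\rangle\sim p^3/4$, and the passage $\cos\theta\to 3/4$, $\sin\theta\to\sqrt7/4$ all match what the paper does (the paper phrases the spectra via isosceles triangles with legs $\approx 1/\sqrt3$ rather than via inner products, which is equivalent). Your two-coordinate argument for $\scrG$--$\scrG$ visibility (the integer differences $c$ and $c-p$ at the positions where one of the two points has the coordinate $0$) is in fact more elementary than the paper's, which instead notes that the coordinates of the two points are permutations of the same set, so a common divisor $b\ge 2$ of all differences would force every coordinate into a single residue class mod $b$. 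You do, however, omit the $\scrC$--$\scrC$ case, which the theorem also asserts when $d=p-1$; it is easy (the coordinate differences take only the two values $m$ and $m-p$, coprime since $p$ is prime), but it is a separate case and must be stated.

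The genuine gap is the mixed visibility of $\fc\in\scrC$ and $\fg\in\scrG$, which you dispatch with ``a supplementary argument \dots\ again via Weil's bound.'' Weil-type equidistribution cannot by itself rule out a common prime divisor $q$ of all the integer differences when $q$ is large: the number of $k$ with $q\mid\bigl((i+k)\bmod p\bigr)-(j+k)^{-1}$ predicted by equidistribution is $p/q$ with an accumulated error of order $q\,p^{1/2}\log^2 p$ (one error term for each of the $\asymp q$ residue-class boxes), and this does not fall below $p$ once $q\gg p^{1/4}$, so no contradiction is reached. The paper first disposes of large $q$ by an elementary count: after rotating so that the only hard case is $h=0$, each value of $|n-n^{-1}|$ is attained by at most four $n$ (the congruence $n-n^{-1}\equiv\pm r\pmod p$ is quadratic in $n$), so there are exactly $\lfloor p/4\rfloor+1$ distinct values; if all were multiples of $q$ there could be at most $p/q+1$ of them, which forces $q\in\{2,3\}$, and only then is the Weil/Lehmer-type equidistribution in arithmetic progressions invoked to eliminate $q=2$ and $q=3$. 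Your proposal needs this (or an equivalent) reduction to bounded $q$; as written, the claim that no prime $q<p$ divides every coordinate difference does not follow from the tools you cite.
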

% N(1/sqrt(6),digits=5)
% 0.40825
% x=1/4*sqrt(7) = 0.6614378 
% r=arcsin(1/4*sqrt(7)) = 0.7227342 radians 
% = 180*arcsin(1/4*sqrt(7))/pi degrees 
% = 41.40962 degrees  
% x=1/8*sqrt(39) = 0.7806247 
% r=arcsin(1/8*sqrt(39)) = 0.8956648 radians 
% = 180*arcsin(1/8*sqrt(39))/pi degrees 
% = 51.31781 degrees
% \noindent
Note the decimal approximation of the size of the spectra 
in Theorem~\ref{TheoremCG}:

$\sqrt{7}/4 \approx 0.661438$, 
$\arcsin(\sqrt{7}/4) \approx 0.72273$ radians or
$\approx 41.40962^\circ$; and

$\sqrt{39}/8 \approx 0.78063$, 
$\arcsin(\sqrt{39}/8) \approx 0.89566$ radians or
$\approx 51.31781^\circ$.

As one can see from Theorem~\ref{TheoremCG}, about a quarter of all distances 
\mbox{$\distance_d(\bv,\bw)$}
with $\bv,\bw\in\scrC\cup\scrG$ are singular, being different from the other three quarters 
which in the limit are all equal to $1/\sqrt{6}$.
However, in Subsection~\ref{SubsectionAverageCG} we prove that if $d=p-1$ then $A_p(\scrC\cup\scrG)$, 
the average of all normalized distances between points in $\scrC\cup\scrG$, is still the same
$1/\sqrt{6}$, in the limit as $p\to \infty$.

% Next we introduce two polytopes with vertices in $\cW$ t
% hat offer contrasting cross sections in the 
% set of points visible from each other.

In Section~\ref{SubsectionMatrixB} we complement
the phenomena observed in Theorem~\ref{TheoremCG} 
with yet another polytope $\scrB$ whose vertices are visible from each other, even though they 
lie all almost  aligned on a straight line.
This set wraps around the diameter of $\cW$ being composed by close neighbors of 
the equally spaced points on the longest diagonal of the cube. 
% These points have been slightly perturbed along the main diagonal of the matrix representing them. 
The limit set of the normalized distances 
between the points in this example equals the full interval~$[0,1]$,
if their number tends to infinity.
The same goes for the limit of the spectrum~$\spec(\scrB)$, which is 
the interval~$[0,1]$, also.
Taken together, merged into a geometric spindle shape,
the polytopes $\scr C, \scr G$ and $\scr B$ combine their
arithmetic and probabilistic properties keeping in balance 
the spinning top intrinsic qualities of the still lattice 
hypercube $\cW$.

\smallskip

The paper is organized as follows. 
In Section~\ref{SectionCGD} we present the special polytopes
$\scrC, \scrG$ and~$\scrB$, checking the visibility and the mutual
distances between their vertices, which proves Theorem~\ref{TheoremCG}.
In Section~\ref{SectionOmega} we turn to the visibility
in the whole lattice and calulate the size of $\Omega$.
In  Section~\ref{SectionAv}
we find the average of the distances between points 
in $\cW$ that are visible from each other and in Section~\ref{SectionM2} we estimate 
the second moment about their mean.
We use these results in Section~\ref{SectionProofT1T2C1} to obtain
effective results that in particular prove 
Theorems~\ref{Theorem1}, \ref{Theorem2} and Corollary~\ref{Corollary1}.
In Section~\ref{SectionProbabilityOK} we discuss at large the problem of self-visible $K$-polytopes and prove Theorem~\ref{TheoremSelfVisible}.
We conclude in Section~\ref{SectionIntuition} 
with a possible good place to start. It is a short heuristics 
that might be useful to adjust with the little peculiarities 
that appear from higher dimensions. 
It is an intuitive touch on the matter, 
although it is done in a continuous, where visibility has no
meaning, unlike the discrete universe which we explore beyond.

%%%%%%%%%%%%%%%%%%%%%%%%%%
\section{Three distinguished polytopes. Their edges and diagonals.}\label{SectionCGD}
% \noindent
Here we discuss three examples of polytopes with a number of vertices 
of order almost equal in size with the dimension of the hypercube.
The polytopes $\scrC$ and $\scrG$ whose vertices are the rows of 
the matrices $M_\scrC$ and 
$M_\scrG$ introduced by~\eqref{eqSymmetryMatrixC} and~\eqref{eqSymmetryMatrixG} 
are as similar in construction as they are very different in shape.
The first has the distances between its vertices well spread over a long interval, 
while the second has all the distances between the vertices approximately equal, 
being as `equilateral' as it could be, as $d\to\infty$.

%%%%%%%%%%%%%%%%%%%%%%%%%%%%%%%%%%%%%%%%%%
\subsection{Proof of Theorem~\ref{TheoremCG} -- visibility}\rule[-10pt]{0pt}{10pt}\\
% Trick to break he line and ad spacings around subsection
Notice first that by the definition of $\scrC$ and $\scrG$ and that
of visibility~\eqref{eqgcd}, all points in $\scrC\cup\scrG$ are visible 
from the origin.

\subsubsection{Any two points belonging to either $\scrC$ or $\scrG$  
are visible from each other}

Let us suppose~\mbox{$d =p-1$} and let $\bv,\bw\in \cW$. 
Also, suppose that the coordinates of $\bv$ are a
permutation of $\{0,1,\dots,d\}$ and the coordinates of $\bw$ 
are a circular rotation of the coordinates of $\bv$.
Then, if $\bv$ and $\bw$ were not visible from each other, then it would exist 
an integer $b\ge 2$ such that the following congruences would hold:
\begin{equation}\label{eqInvVisible}
    v_0-w_0\equiv 0\pmod b; \ v_1-w_1\equiv 0\pmod b;\,  \dots;\
    v_{d}-w_{d}\equiv 0\pmod b.
\end{equation}
But since $p$ is prime and $v_0,\dots,v_{d}$ are all distinct, 
as $w_0,\dots,w_{d}$ also are, 
and since they belong to the same set of numbers, $\{0,1,\dots, d\}$, 
which appear each in exactly two of the congruences in~\eqref{eqInvVisible}, 
we find that 
\begin{equation*}%\label{eqInvVisible}
    v_0\equiv v_1\equiv \cdots \equiv v_{d} \equiv r\pmod b,
\end{equation*}
for some $r\in\{0,1,\dots, b-1\}$. 
But this is impossible, unless $b=1$.
Therefore, two points that belong to one and the same set, 
be it either $\scrC$  or $\scrG$  are visible from each other.

%%%%%%%%%%%%%%%%%%%%%%%%%%%%%%%%%%%%%%%%%%
\subsubsection{Any two points $\fc\in \scrC$ and $\fg\in \scrG$  
are visible from each other}

Suppose $d=p-1$  and let  $\fc\in \scrC$ and $\fg\in \scrG$.
Denote by $g = \gcd(v_1-w_1,\dots,v_d-w_d)$ the expression 
that needs to be checked in the condition~\eqref{eqgcd}. 
Since $g$ is invariant under the same circular rotation applied to both
$\fc\in \scrC$ and $\fg\in \scrG$, we may assume that
% \mbox{$\fg = (0,1^{-1},\dots, (p-1)^{-1})$} and $\fc = (h,h+1,\dots,p-1,0,1,\dots, h-1)$
% \begin{equation*}
%    \begin{split}
%       \fg &= (0,1^{-1},\dots, (p-1)^{-1});\\
%       \fc &= (h,h+1,\dots,p-1,0,1,\dots, h-1)\,,
%    \end{split}
% \end{equation*}
\begin{equation}\label{eqgc}
\arraycolsep=1.4pt\def\arraystretch{1.2}
\begin{array}{ccccccccl}
      \fg = (0,&1^{-1},&\dots,&(p-1-h)^{-1},&(p-h)^{-1},&(p+1-h)^{-1},&\dots, &(p-1)^{-1}&);\\
      \fc = (h,&h+1,&\dots,&p-1,&0,&1,&\dots, &h-1)\,,&
\end{array}
\end{equation}
for some $h\in\{0,1,\dots,p-1\}$. 
As usual, the coordinates are taken as their representatives modulo $p$ in the interval $[0,p-1]$.  
Since $1^{-1}\equiv 1\pmod p$ and $(p-1)^{-1}\equiv p-1\pmod p$,
the difference between the second components of $\fc$ and $\fg$ is 
$(h+1)-1^{-1}=h$ and 
the difference between the last components is $(h-1)-(p-1)^{-1}=h-p$.
Because $p$ is prime, these differences are relatively prime, so that $\fc$ and $\fg$ 
are visible from each other unless $h=0$.

Suppose now that $h=0$ in~\eqref{eqgc}.
Then, if $\fc$  is not visible from $\fg$, there exists a prime number $2\le q\le p$
such that $q \mid (n-n^{-1})$ for $1\le n\le p-1$. Let us notice that for any $0\le r \le p-1$
\begin{equation*}%label{eqdifference2}
    \#\left\{1\le n\le p-1 \; :\; \left| n - n^{-1}\right|=r\right\}\le 4.
\end{equation*}
This is because $n - n^{-1}\equiv \pm r\pmod p$ is equivalent to $n^2\mp rn-1\equiv 0\pmod p$, 
congruence that has at most two solutions for each sign. 
Even more precise, if $r\neq 0$, if the congruence $n - n^{-1}\equiv r\pmod p$ 
has solution $a$ then it has solution $-a^{-1}$, also. These solutions are always distinct unless
$a\equiv -a^{-1}\pmod p$, which happens only if $p\equiv 1\pmod 4$, in which case 
$a=\big(\frac{p-1}{2}\big)!$.
If $r=0$, there are exactly three values of $n\in\{0,1,\dots,p-1\}$ for which 
$n - n^{-1}\equiv  r\pmod p$, namely $0,1,p-1$.
In conclusion, putting together these observations, while counting separately in the cases 
$p=2$, $p\equiv 1\pmod 4$ and $p\equiv 3\pmod 4$, we obtain in all cases the same  
number of distinct absolute values of differences  
\begin{equation}\label{eqppe4p1}
    \#\left\{\left| n - n^{-1}\right| \; :\; 1\le n \le p-1\right\}=\left\lfloor \frac p4\right\rfloor+1.
\end{equation}
Then, since 
\begin{equation*}
    \left\{\left| n - n^{-1}\right| \; :\; 1\le n \le p-1\right\}\subset \{0,1,\dots, p-1\}
\end{equation*}
% $\left\{\left| n - n^{-1}\right| \; :\; 1\le n \le p-1\right\}\subset \{0,1,\dots, p-1\}$ 
and by our assumption a prime~$q$ divides all differences $ n - n^{-1}$, 
it follows that $q$ has to be either $2$ or $3$.

If $q=2$, the equality~\eqref{eqppe4p1} says that the number of pairs $(n,n^{-1})$, $1\le n\le p-1$ 
of the same parity is $\lfloor p/4\rfloor+1$. But this is not in agreement with  
Lehmer's conjecture~\cite[Problem F12]{Guy2004}, which is proved also for shorter general 
arithmetic progressions\cite[Theorem 1]{CZ2001b}).
A~particular case of that result shows that if $I, J\subset\{1,\dots,p-1\}$ are arithmetic progressions of ratios
$d_1,d_2\ge 1$, then
\begin{equation}\label{eqLehmer}
    \#\left\{(a,b)\in I\times J \; :\; ab\equiv 1\pmod p\right\}=\frac{\#I\cdot \#J}{p} 
        +O\left(p^{1/2}\log^2 p\right).
\end{equation}
Then, if $d_1=d_2=2$, counting the pairs $(n,n^{-1})$ with either both even or both odd components, 
we see that their total number is $p/2+O\left(p^{1/2}\log^2 p\right)$, 
which contradicts~\eqref{eqppe4p1}.
Likewise, in the remaining case $q=3$, with $d_1=d_2=3$, counting the pairs $(n,n^{-1})$ 
whose components both give the same remainder $0$, $1$ or $2$ when dividing by $3$, 
we find that their total number is $p/3+O\left(p^{1/2}\log^2 p\right)$, which is
also different from~\eqref{eqppe4p1}.
In conclusion, $\fc$ and~$\fg$ are visible from each other, 
which concludes the proof of the first part of Theorem~\ref{TheoremCG}.

% \medskip\medskip\medskip\medskip
\vspace*{2mm}
%%%%%%%%%%%%%%%%%%%%%%%%%%%%%%%%%%%%%%%%%%%%%%%%
\subsection{Proof of Theorem~\ref{TheoremCG}  -- the distances}\label{SubsectionDistances}%\rule[-10pt]{0pt}{10pt}\\
% Trick to break he line and ad spacings around subsection
%%%%%%%%%%%%%%%%%%%%%%%%%%%%%%%%%%%%%%%%%%
\subsubsection{Distances between points of $\scrC$}\label{SubsubsectionC}
Remark that the same rotation applied to the coordinates of two points, 
does not change the distance between them, which, in particular, shows that we have
\begin{equation}\label{eqSymmetry}
    \distance(\fc_k,\fc_l)=\distance(\fc_0,\fc_{l-k})\,, \text{ for any $0\le k\le l\le d$.}
\end{equation}
This means that the range of values of all distances between any two points in $\scrC$ is 
covered by the distances between $\fc_0$  and each of $\fc_1,\dots,\fc_{d}$.
A straightforward calculation shows that in closed form these are:
\begingroup
\setlength{\arraycolsep}{1pt}
    \begin{equation}\label{eqSymmetryAllDistancesA}
        \begin{array}{rl}
       \distance(\fc_0,\fc_1)&=\left((d-0)\cdot 1^2+1\cdot (d-0)^2\right)^{1/2}\\
       \distance(\fc_0,\fc_2)&=\left((d-1)\cdot 2^2+2\cdot (d-1)^2\right)^{1/2}\\
       \distance(\fc_0,\fc_3)&=\left((d-2)\cdot 3^2+3\cdot (d-2)^2\right)^{1/2}\\
        \hdotsfor{2}\\
%         \distance(\fc_0,\fc_{d-1})&=\left((d-(d-2))\cdot (d-1)^2+(d-1)\cdot (d-(d-2))^2\right)^{1/2}\\
        \distance(\fc_0,\fc_d)&=\left((d-(d-1))\cdot d^2+d\cdot (d-(d-1))^2\right)^{1/2}.
        \end{array}
\end{equation}
\endgroup
Not all of these numbers are distinct, because of the symmetry of the parabola: 
$x\mapsto f(x)$, where $f(x)=(d-(x-1))\cdot x^2+x\cdot (d-(x-1))^2=(d+1)x(d+1-x)$, for $x\in [0,d+1]$.
The maximum of $f(x)$  is attained for $x=(d+1)/2$ and it is equal to $(d+1)^3/4$.
Also, $f(x)=f(d+1-x)$ and the values of $f(x)$ on the integers 
between $0$ and $d+1$ cover quite uniformly  the interval $[0,(d+1)^3/4]$ as $d$ becomes sufficiently large.
Precisely, for any $y\in[0,1/2]$ and any $\varepsilon>0$, there are $\fc',\fc''\in\scrC$ such that 
$y-\varepsilon<\distance_d(\fc',\fc'')<y+\varepsilon$.
We summarize in the following proposition these remarks on the polytope~$\scrC$.

\begin{proposition}\label{PropositionC}
   Let $\scrC=\{\fc_0,\fc_1,\dots,\fc_d\}$ be the set of points 
   whose coordinates are the rows of matrix~\eqref{eqSymmetryMatrixC}. 
   Then, the set of normalized distances between any two points in $\scrC$ is equal to 
\begin{equation}\label{eqPropositionC}
   \cD(\scrC):=\left\{
    \frac{\sqrt{(d+1)x(d+1-x)}}{d^{3/2}} \; :\; 
         0\le x\le \left\lfloor\sdfrac{d+1}{2}\right\rfloor
   \right\}
\end{equation}
and the set $\cD(\scrC)$ is dense in the interval $[0,1/2]$ as $d$ tends to infinity.
%    $\{(d+1)x(d+1-x) : 0\le x\le \left\floor\frac{(d+1)^3}{4}\right\floor \}$
\end{proposition}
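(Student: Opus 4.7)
My plan is to reduce the computation of all pairwise distances in $\scrC$ to a one-parameter family indexed by the shift $k=l-k$, evaluate that family in closed form, and then perform a gap analysis in $k$ to obtain density in $[0,1/2]$.

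First I would establish the circulant symmetry already noted in \eqref{eqSymmetry}: because $M_\scrC$ is built by cyclic shifts of a single row, applying the same cyclic shift simultaneously to two vertices $\fc_k,\fc_l$ just permutes coordinates and hence preserves the Euclidean distance. Consequently $\distance(\fc_k,\fc_l)=\distance(\fc_0,\fc_{l-k})$ for $0\le k\le l\le d$, so the full distance set is determined by the $d$ values $\distance(\fc_0,\fc_k)$, $k=1,\dots,d$.

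Next I would evaluate $\distance(\fc_0,\fc_k)$ in closed form. The coordinates of $\fc_0$ are $(0,1,\dots,d)$ and those of $\fc_k$ are the same sequence rotated by $k$ positions. Comparing entries term-by-term, exactly $d+1-k$ of the $d+1$ coordinate differences equal $\pm k$ and the remaining $k$ coordinate differences equal $\mp(d+1-k)$, which matches the expansion in \eqref{eqSymmetryAllDistancesA}. Collecting terms yields
\begin{equation*}
  \distance(\fc_0,\fc_k)^2=(d+1-k)k^2+k(d+1-k)^2=k(d+1-k)(d+1),
\end{equation*}
and dividing by $Nd^{1/2}=d\cdot d^{1/2}=d^{3/2}$ gives the formula $\sqrt{(d+1)k(d+1-k)}/d^{3/2}$ appearing in \eqref{eqPropositionC}. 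The symmetry $k\mapsto d+1-k$ (reading off the same parabola $f(x)=(d+1)x(d+1-x)$) shows that restricting $k$ to the range $0\le k\le \lfloor (d+1)/2\rfloor$ already produces every distance, which proves the first assertion.

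For the density assertion I would consider the continuous function $g(x):=\sqrt{(d+1)x(d+1-x)}/d^{3/2}$ on $[0,(d+1)/2]$. Its endpoint values are $g(0)=0$ and $g((d+1)/2)=(d+1)^{3/2}/(2d^{3/2})\to 1/2$, so the range of $g$ converges to $[0,1/2]$. It then suffices to bound the gaps between the sampled values $g(k)$, $k=0,1,\dots,\lfloor (d+1)/2\rfloor$. Concavity of $g$ gives $g(k+1)-g(k)\le g'(k)$ for $k\ge 1$, and a direct computation shows $g'(x)=O(1/\sqrt{d})$ uniformly on $[1,(d+1)/2]$; the first gap $g(1)-g(0)=\sqrt{d(d+1)}/d^{3/2}$ is likewise $O(1/\sqrt{d})$. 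Hence every point of $[0,1/2]$ lies within $O(1/\sqrt{d})$ of some $g(k)$, and letting $d\to\infty$ proves that $\cD(\scrC)$ is dense in $[0,1/2]$.

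The only mildly delicate step is the gap bound near $x=0$, where $g'$ blows up: one must argue at the level of $g(k+1)-g(k)$ directly rather than using the derivative, but since $g(1)$ itself is already $O(1/\sqrt{d})$ this does no harm. The rest of the proof is a routine verification of the algebraic identity for $\distance(\fc_0,\fc_k)^2$ and the monotonicity/symmetry of the parabola $f$.
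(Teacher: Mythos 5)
Your proposal is correct and follows essentially the same route as the paper: circulant symmetry reduces everything to $\distance(\fc_0,\fc_k)$, the closed form $(d+1)k(d+1-k)$ matches \eqref{eqSymmetryAllDistancesA}, and the parabola's symmetry gives the restriction to $0\le x\le\lfloor(d+1)/2\rfloor$. Your explicit $O(1/\sqrt{d})$ gap bound via concavity (with the separate treatment of the first gap $g(1)-g(0)$) actually supplies more detail than the paper, which simply asserts that the values cover the interval uniformly.
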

% \begin{remark}\label{Remark11}
% \noindent
% In particular, we see from Proposition~\ref{Prop^3/LpositionC} that the normalized 
% lengths of the edges and diagonals of 
% $\scrC$ are spread allover the first half of the interval of all possible lengths.
% \end{remark}

%%%%%%%%%%%%%%%%%%%%%%%%%%%%%%%%%%%%%%%%%%
\subsubsection{Distances between points of $\scrG$}\label{SubsubsectionG}

Let $p$ be a prime number and $N=d=p-1$. 
The polytope $\scrG=\{\fg_0,\dots,\fg_{p-1}\}$ is formally close to $\scrC$.
The components of the points are the same, except that the numbers are inverted modulo $p$. 
The classes of the representatives of the inverses are taken from $\{1,\dots,p-1\}$ and 
by convention the inverse of an integer divisible by $p$, which does not exist, 
is always replaced by $0$. 
% Thus, let $\scrG:=\{\fg_0,\dots,\fg_{p-1}\}$  be the set of points with integer coordinates whose 
% components are the rows of the matrix:
% \begin{equation}\label{eqSymmetryMatrixG}
% M_\scrG=
%     \begin{bmatrix}
%         0 & 1^{-1} & \cdots & (p-2)^{-1} & (p-1)^{-1}\\
%         1^{-1} & 2^{-1} & \cdots & (p-1)^{-1} & 0\\
%         2^{-1} & 3^{-1} & \cdots & 0 & 1^{-1}\\
%         \vdots & \vdots & \cdots & \vdots & \vdots\\
%         (p-2)^{-1} & (p-1)^{-1} & \cdots & (p-4)^{-1} & (p-3)^{-1}\\
%         (p-1)^{-1} & 0 & \cdots & (p-3)^{-1} & (p-2)^{-1}
%     \end{bmatrix}
% \end{equation}
Let us remark that the influence of just a single component in the first point, while 
the others are obtained by circular rotations as in $\scrG$, has small and even negligible 
influence as $p\to\infty$, on the mutual distances between the points in $\scrG$. 
This is why we could keep, for balance, in $\scrG$ the components zero, even if zero has no 
inverse modulo $p$.  

The main motivation for choosing $\scrG$ is the random spread of the inverses. 
Various ways to measure the randomness of inverses, triggered by~\cite[Problem F12]{Guy2004}, 
have been studied in~\cite{CGZ2001,
% CVZ2003,
CZ2001b,
CZ2001a%,
% CZ2001b
}. In~\cite{Zah2003} the focus is on the values of polynomials and rational 
functions mod~$p$ and the results there might be also used to build 
other polytopes with similar characteristics. 

%%%%%%%%%%%%%%%%%%%%%%%%%%%%%%%%%%
\begin{lemma}\label{LemmaCor2003}
If $p$ is prime and $1\le h\le p-1$, we have
    \begin{equation}\label{eqCor2003}
   \sum_{x\in\FF_p\setminus\{0,p-h\}}\left|(x+h)^{-1}-x^{-1}\right|^2
    = \frac{p^3}{6}+O\left(p^{5/2}\log^2 p\right).
\end{equation}
\end{lemma}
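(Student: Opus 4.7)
The plan is to expand the square and reduce everything to an equidistribution statement for the pair $\bigl(x^{-1},(x+h)^{-1}\bigr)$ in axis-aligned rectangles of $\{0,\dots,p-1\}^2$, a two-variable analogue of the Lehmer-type bound~\eqref{eqLehmer} already used in the paper. Writing $u(x):=x^{-1}$ and $v(x):=(x+h)^{-1}$ as integer representatives in $\{1,\dots,p-1\}$, which is legal since $x\neq 0,-h$, expansion of the square yields
\[
\sum_{x\in\FF_p\setminus\{0,p-h\}}(v-u)^2 \;=\; \sum_x u^2 \;-\; 2\sum_x uv \;+\; \sum_x v^2.
\]
The two diagonal sums are immediate: as $x$ runs over $\FF_p^{*}\setminus\{-h\}$, each of $u$ and $v$ runs bijectively over $\{1,\dots,p-1\}$ with a single value removed, so $\sum_x u^2=\sum_x v^2=\frac{(p-1)p(2p-1)}{6}+O(p^2)=\frac{p^3}{3}+O(p^2)$.

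For the cross term I would use the Iverson identity $m=\#\{K\in\{1,\dots,p-1\}:K\le m\}$ (valid for $m\in\{0,\dots,p-1\}$) to write
\[
\sum_x u(x)v(x) \;=\; \sum_{K=1}^{p-1}\sum_{L=1}^{p-1}\#\bigl\{x : u(x)\ge K,\ v(x)\ge L\bigr\},
\]
and then feed in the key auxiliary estimate: for any intervals $I,J\subseteq\{0,\dots,p-1\}$,
\[
\#\bigl\{x\in\FF_p\setminus\{0,-h\} : x^{-1}\in I,\ (x+h)^{-1}\in J\bigr\}\;=\;\frac{|I|\,|J|}{p}+O\bigl(p^{1/2}\log^2 p\bigr).
\]
This is obtained from the Weil-type bound $\sum_x e_p\bigl(ax^{-1}+b(x+h)^{-1}\bigr)=O(p^{1/2})$ for $(a,b)\not\equiv(0,0)\pmod p$---the relevant rational function has only simple poles at $0$ and $-h$---combined with the standard Erd\H{o}s--Tur\'an / Polya--Vinogradov completion of the two interval indicators. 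Summing the main term over $K,L$ contributes $\frac{1}{p}\bigl(\frac{(p-1)p}{2}\bigr)^{2}=\frac{p^3}{4}+O(p^2)$, while the $p^2$ error terms accumulate to $O(p^{5/2}\log^2 p)$, yielding $\sum_x uv=\frac{p^3}{4}+O(p^{5/2}\log^2 p)$.

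Assembling the three pieces gives $\frac{2p^3}{3}-\frac{p^3}{2}+O(p^{5/2}\log^2 p)=\frac{p^3}{6}+O(p^{5/2}\log^2 p)$, which is~\eqref{eqCor2003}. The main obstacle is the rectangle-equidistribution bound with the clean $p^{1/2}\log^2 p$ error: Weil handles the complete sum easily, but the completion has to be orchestrated so that summing the Fourier error over the $p^{2}$ pairs $(K,L)$ only inflates the error up to the stated $p^{5/2}\log^2 p$, exactly matching the shape of the error in~\eqref{eqLehmer}. As a backup strategy, one can extract the required rectangle count directly from~\eqref{eqLehmer} by transporting $(u,v)$ onto the conic $u-v\equiv uvh\pmod p$ via a birational change of variables and slicing along an appropriate pencil of arithmetic progressions.
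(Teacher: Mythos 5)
Your argument is correct and rests on the same engine the paper invokes for this lemma: the paper gives no written proof beyond citing \cite[Corollary 1.3]{Zah2003} for $h=1$ (itself established via Weil's bounds for exponential sums with rational functions) and asserting that $h\ge 2$ goes ``in a similar manner,'' and your expansion of the square together with the two-variable Weil/completion estimate for $\#\{x:\ x^{-1}\in I,\ (x+h)^{-1}\in J\}$ is precisely a self-contained unfolding of that method. The bookkeeping checks out --- $2\cdot\tfrac{p^3}{3}-2\cdot\tfrac{p^3}{4}=\tfrac{p^3}{6}$, the $(p-1)^2$ rectangles each carrying an $O(p^{1/2}\log^2 p)$ error give exactly the stated $O(p^{5/2}\log^2 p)$, and the needed complete sum $\sum_x e_p\!\left(ax^{-1}+b(x+h)^{-1}\right)$ is indeed $O(p^{1/2})$ for $(a,b)\not\equiv(0,0)$ because the rational function is nonconstant, its poles $0$ and $-h$ being distinct when $1\le h\le p-1$.
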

%%%%%%%%%%%%%%%%%%%%%%%%%%%%%%%%%%%
Here the inverses are calculated in $\FF_p$ and the absolute value calculates 
the distance between two natural numbers in $\{0,1,\dots,p-1\}$, the corresponding representatives of the residue classes \mbox{mod $p$} of the inverses.

For $h=1$, the estimate~\eqref{eqCor2003} is a particular case of~\cite[Corollary 1.3]{Zah2003}, which is proved using Weil's bounds~\cite{Wei1948} for exponential sums with rational functions. 
For $h\ge 2$ the estimate~\eqref{eqCor2003} can be proved in a similar manner.
  
Taking the square root and dividing by $p^{3/2}$, we find by Lemma~\ref{LemmaCor2003}, that 
the normalized distance between any two points $\fg',\fg''\in\scrG$~is
\begin{equation}\label{eqDistanceG}
   \distance_d(\fg',\fg'') = \sdfrac{1}{\sqrt{6}}\left(1+O\left(\sdfrac{\log^2 p}{\sqrt{p}}\right)\right),
\end{equation}
which proves part 3 of Theorem~\ref{TheoremCG}.

% This is a sort of ``the most unexpected find'', since the distance is the same, no matter whether 
% they the points $\bw',\bw''$ are the endpoints of an edge or a diagonal of any kind in $\scrG$.

% \begin{remark}\label{Remark21}
% If $d\ge 2$ then any two points in $\scrG$ are visible from each other 
% by the same argument from Remark~\ref{Remark12}.
% 
% 
% 
% Reason: the component-wise difference between any distinct rows of %matrix~\eqref{eqSymmetryMatrixG} 
% are a set of numbers (representatives modulo $p$ from $\{0,1,\dots,p-1\}$) grouped %two by two equal to each other, but
% it can happen that exactly one group of four numbers are all equal to each other.
% Consequently, the set of differences contains about $p/2$ distinct classes modulo $p$  
% that are randomly distributed between $0$ and $p-1$, and therefore they 
% should not have a common divisor $>1$. 
% \end{remark}

%%%%%%%%%%%%%%%%%%%%%%%%%%%%%%%%%%%%%%%%%%%%%%%%%%%%%%%
\subsubsection{Distances between $\fc\in\scrC$ and $\fg\in\scrG$}\label{SubsectionDistancesDG}

It suffices to find the distance between $\fc$ and $\fg$ in~\eqref{eqgc}.
For this we have to estimate the sums
\begin{equation}\label{eqSigma12}
   \distance^2(\fg,\fc) = \sum_{n=0}^{p-h-1}\left|(n+h)-n^{-1}\right|^2
     + \sum_{n=p-h}^{p-1}\left|(n+h-p)-n^{-1}\right|^2 = \Sigma_h'+\Sigma_h'',
\end{equation}
where $\Sigma_h'$ and $\Sigma_h''$ are the first and the second sum in~\eqref{eqSigma12}, respectively.
To calculate $\Sigma_h'$, let $L>1$ be fixed, denote $u=(p-h)/L$, $v=p/L$ 
and split the rectangle $[0,p-h]\times[0,p]$ into $L^2$ rectangles 
$T_{j,k}:=\big[ju,(j+1)u\big)\times\big[kv,(k+1v)\big)$.
Then
\begin{equation*}%\label{eqSigma101}
\begin{split}
   \Sigma_h' & = \sum_{j=0}^{L-1}\sum_{k=0}^{L-1} \!\!\!
            \sum_{\ \ (n,n^{-1})\in T_{j,k}}\left(n+h-n^{-1}\right)^2.
\end{split}
\end{equation*}
Here the size of the summand can be kept under control, so that we can replace it
by its value on the lower left corner of $T_{j,k}$. Thus, on using~\eqref{eqLehmer} with
$I\times J=T_{j,k}$, we have
\begin{equation*}%\label{eqSigma102}
\begin{split}
   \Sigma_h' & = \sum_{j=0}^{L-1}\sum_{k=0}^{L-1} 
        \left(ju+h-kv+O(p/L)\right)^2\left(\frac{uv}{p}+O\left(p^{1/2}\log^2 p\right)\right).
\end{split}
\end{equation*}
Now we factor the terms that do not depend on $j,k$ and expand the square
\begin{equation}\label{eqSigma113}
\begin{split}
   \Sigma_h' & = \left(\frac{uv}{p}+O\left(p^{1/2}\log^2 p\right)\right)
        \sum_{j=0}^{L-1}\sum_{k=0}^{L-1} g(j,k,u,v,p,L)
\end{split}
\end{equation}
where
\begin{equation*}%\label{eqSigma102}
\begin{split}
     g(j,k,u,v,p,L)=&j^2u^2+h^2+k^2v^2+2juh-2jkuv-2hkv\\
        &+O(p^2/L^2)+O\Big(p(ju+h+kv)/L\Big).
\end{split}
\end{equation*}
Adding together the terms separately over $k$ and $j$, 
the sum of powers being denoted by \mbox{$S_r(M)=1^r+\cdots+M^r$}, and then collecting together 
the error terms, the double sum from~\eqref{eqSigma113} becomes
\begin{equation*}%\label{eqSigma102}
\begin{split}
    \sum_{j=0}^{L-1}\sum_{k=0}^{L-1} g(j,k,u,v,p,L)=&  Lu^2S_2(L-1)+L^2h^2+Lv^2S_2(L-1)\\
      & + 2LuhS_1(L-1) -2uvS_1^2(L-1) -2LhvS_1(L-1)\\
      & + O(p^2) + O\Big(p\big(LuS_1(L)+hL^2+LvS_1(L)\big)/L\Big)\\
      =& \frac{L^4u^2}{3}+L^2h^2+\frac{L^4v^2}{3}
         + L^3uh - \frac{L^4uv}{2}-L^3vh
         + O\left(p^2L\right).
\end{split}
\end{equation*}
Next we insert this into~\eqref{eqSigma113}, replace $u,v$ by their definition and reduce the terms:
\begin{equation}\label{eqSigma114}
\begin{split}
   \Sigma_h' & = \Big(\sdfrac{uv}{p}+O\big(\sqrt{p}\log^2 p\big)\Big)
        \left(\sdfrac{L^4(u^2+v^2)}{3}+L^2h^2
         - \sdfrac{L^4uv}{2}-L^3h(v-u)
         + O\left(p^2L\right)\right)\\
         & =\Big(p-h+O\big(L^2\sqrt{p}\log^2 p\big)\Big)
         \left(\sdfrac{p^2+(p-h)^2}{3}
         - \sdfrac{p(p-h)}{2}
         + O\left(p^2/L\right)\right)\\
         & = \frac{(p-h)(p^2+2h^2-hp)}{6}+O(p^3/L)+O\left(L^2p^{5/2}\log^2 p\right).
\end{split}
\end{equation}

To estimate $\Sigma''_h$, we make the change of variables $m=p-n$. Note that the representative 
in the interval $[0,p-1]$ of the inverse of $m$  is $m^{-1}=p-n^{-1}$.
Then
\begin{equation*}%\label{eqSigma152}
\begin{split}
   \Sigma_h'' & = \sum_{n=p-h}^{p-1}\left|(n+h-p)-n^{-1}\right)|^2
    % &=\sum_{m=1}^{h}\left|-(m-h+p)+m^{-1}\right|^2\\
     =\sum_{m=1}^{h}\left|(m+p-h)-m^{-1}\right|^2.
\end{split}
\end{equation*}
Apart from the end limits of summation, this is exactly $\Sigma'_{p-h}$. Therefore we have
\begin{equation}\label{eqSigma21}
   \Sigma_h''  = \Sigma_{p-h}' + O(p^2).
\end{equation}
On combining the estimate~\eqref{eqSigma114} with~\eqref{eqSigma21} and inserting 
the results into~\eqref{eqSigma12}, we obtain
\begin{equation}\label{eqFinalDistanceCG}
\begin{split}
    \distance^2(\fg,\fc) & =   \frac{(p-h)(p^2+2h^2-hp)+h(2p^2+2h^2-3hp)}{6}
    +O\left(p^3/L+L^2p^{5/2}\log^2 p\right)\\
    & = \frac{p^3}{6} +O\left(p^3/L+L^2p^{5/2}\log^2 p\right).
\end{split}    
\end{equation}
Balancing the error terms, we find the optimal $L$ that we have fixed at the beginning, namely
$L = \left\lfloor p^{1/6}\log^{-2/3}p\right\rfloor$.
Thus we have proved the following result.
\begin{proposition}\label{PropositionCG}
   Let $d\ge 2$ and let $p$ be prime such that $d=p-1$. 
   Let \mbox{$\scrC=\{\fc_0,\fc_1,\dots,\fc_d\}$} 
   be the set of points 
   whose coordinates are the rows of matrix $M_\scr{C}$ from~\eqref{eqSymmetryMatrixC} and let
   $\scrG=\{\fg_0,\fg_1,\dots,\fg_d\}$ be the set of points 
   whose coordinates are the rows of matrix $M_\scr{G}$ from~\eqref{eqSymmetryMatrixG}. 
   Then, as $p$ tends to infinity, the limit set of the normalized distances between 
   any two points $\fc\in\scrC$ and $\fg\in\scrG$ is the single point $\{1/\sqrt 6\}$ 
   and 
\begin{equation}\label{eqDistanceCG}
   \distance_d(\fg,\fc) =\frac{1}{\sqrt{6}}+O\left(p^{-1/6}\log^{2/3} p\right).
\end{equation}
\end{proposition}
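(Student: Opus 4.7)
The plan is to reduce, via the rotational symmetry of both $\scrC$ and $\scrG$, to computing $\distance^2(\fg,\fc)$ for a single parametric family. Applying the same circular rotation to both points preserves their distance, so I may assume $\fg$ and $\fc$ have the canonical form displayed in~\eqref{eqgc} with shift parameter $h \in \{0,1,\dots,p-1\}$. I would then write
\[
    \distance^2(\fg,\fc) = \Sigma_h' + \Sigma_h'',
\]
splitting according to whether the representative of $n+h \pmod{p}$ in $\{0,\dots,p-1\}$ is $n+h$ (when $n < p-h$) or $n+h-p$ (when $n \geq p-h$). This accounts for the wraparound of residues modulo $p$.

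The core analytic step is to estimate $\Sigma_h'$ using the quasi-equidistribution of the pairs $(n, n^{-1}) \bmod p$. I would partition the rectangle $[0,p-h] \times [0,p]$ into $L^2$ sub-rectangles $T_{j,k}$ of dimensions $u \times v$ with $u = (p-h)/L$ and $v = p/L$. The Weil-type bound encoded in~\eqref{eqLehmer} gives the count of pairs landing in each $T_{j,k}$ as $uv/p$ plus an error $O(p^{1/2}\log^2 p)$. On each $T_{j,k}$ I replace the summand $((n+h)-n^{-1})^2$ by its value at the lower-left corner (introducing a multiplicative error $O(p/L)$), expand the resulting quadratic in $j,k$, and sum by the standard formulas for $S_1$ and $S_2$. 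After collecting, $\Sigma_h'$ reduces to $\tfrac{(p-h)(p^2 + 2h^2 - hp)}{6}$ plus errors $O(p^3/L)$ and $O(L^2 p^{5/2} \log^2 p)$.

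For $\Sigma_h''$ I would use the involution $m = p - n$, under which the representative of the inverse transforms as $m^{-1} = p - n^{-1}$. This identifies $\Sigma_h''$ with $\Sigma_{p-h}'$ up to a boundary contribution of size $O(p^2)$. Adding the two estimates, the $h$-dependent terms cancel and the expression collapses to $p^3/6$ plus the accumulated error. Balancing the two error sources fixes the optimal partition size $L = \lfloor p^{1/6}\log^{-2/3} p \rfloor$, producing $\distance^2(\fg,\fc) = p^3/6 + O(p^{17/6} \log^{2/3} p)$. Taking square roots and dividing by the normalizer $N d^{1/2} = (p-1)^{3/2}$ then yields~\eqref{eqDistanceCG}.

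The main obstacle will be twofold. First, one must track the error terms carefully through the partitioning, noting that the local-constant approximation contributes an error multiplied by the density $uv/p$, whereas the Weil-bound slack contributes $L^2 p^{5/2}\log^2 p$; these play against each other and are balanced only at the specific exponent $L \sim p^{1/6}$. Second, one has to verify the non-obvious cancellation of the $h$-dependent terms when $\Sigma_h'$ and $\Sigma_{p-h}'$ are added. This cancellation expresses the symmetry inherent in the construction of $\scrG$, but it emerges only after the algebraic expansion is carried out.
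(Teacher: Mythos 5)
Your proposal follows the paper's own argument essentially step for step: the same reduction to the canonical pair in~\eqref{eqgc}, the same split $\distance^2(\fg,\fc)=\Sigma_h'+\Sigma_h''$, the same partition of $[0,p-h]\times[0,p]$ into $L^2$ boxes estimated via~\eqref{eqLehmer}, the same involution $m=p-n$ identifying $\Sigma_h''$ with $\Sigma_{p-h}'+O(p^2)$, and the same balancing $L=\lfloor p^{1/6}\log^{-2/3}p\rfloor$. The cancellation you flag does occur --- $(p-h)(p^2+2h^2-hp)+h(2p^2+2h^2-3hp)=p^3$ --- so the argument is correct and matches the paper's proof.
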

Proposition~\ref{PropositionCG} proves part 4 of Theorem~\ref{TheoremCG}.

%%%%%%%%%%%%%%%%%%%%%%%%%%%%%%%%%%%%%%%%%%%%5
% \subsection{The spectrums 
% \texorpdfstring{$\spec(\scrC)$}{Spec(C)}, 
% \texorpdfstring{$\spec(\scrG)$}{Spec(G)} and
% \texorpdfstring{$\spec(\scrC\cup\scrG)$}{Spec(CuG)}}\label{SubsectionSpectrumCG}
%%%%%%%%%%%%%%%%%%%%%%%%%%%%%%%%%%%%%%%%%%%%5
\subsection{Proof of Theorem~\ref{TheoremCG} -- the spectra}\label{SubsectionSpectrumCG}\rule[-10pt]{0pt}{10pt}\\
% Trick to break he line and ad spacings around subsection
% 
From the origin, which we denote by $\bZ$, 
the normalized distances toward points in $\scrC$ or $\scrG$ are equal
\begin{equation}\label{eqDistancesCGZ}
\distance_d(\bZ,\fd_h)=\distance_d(\bZ,\fc_h)
    =\left(\frac{d(d+1)(2d+1)}{6}\right)^{1/2}d^{-3/2}
    = \frac{1}{\sqrt{3}}+O(1/d), 
\end{equation}
for $0\le h\le d$, if $p=d+1$. The distances between points in
$\scrC$ are given by Proposition~\ref{PropositionC}.
Since the set of distances between points in $\scrC$ are in the limit, 
as $d\to\infty$, dense in the interval $[0,1/2]$, the limit of the
spectrum $\spec(\scrC)$ is also a continuous interval. 
The end points of the limit $\spec(\scrC)$ come from the limit angles of the 
isosceles triangles with vertices in $\bZ$ having two edges equal to 
$1/\sqrt{3}+O(1/d)$, according to~\eqref{eqDistancesCGZ}, 
while the third edge  is the shortest and the longest distance between 
points in $\spec(\scrC)$, respectively.
By~\eqref{eqPropositionC}, the acutest triangle has its third edge equal to 
$O(1/d)$, while the triangle with the largest angle at $\bZ$ has its third
edge equal to $1/2+O(1/d)$. 
Then a straightforward calculation gives the end points of the limit spectrum 
$0$ and $\sqrt{39}/8$.

If $d=p-1$, the mutual distances between points either from 
$\scrG$ or from $\scrC\cup\scrG$ are all equal to $1/\sqrt{6}+O\left(p^{-1/6}\log^{2/3} p\right)$, by 
relation~\eqref{eqDistanceG} and 
Proposition~\ref{PropositionCG}, respectively.
Then, the limit spectra $\spec(\scrG)$ and $\spec(\scrC\cup\scrG))$
are equal and discrete, containing exactly one point.
According to~\eqref{eqDistancesCGZ}, the limit point is the sine of the acutest angle of the isosceles triangle with two edges equal to $1/\sqrt{3}$ and the third equal to 
$1/\sqrt{6}$. Since this is equal to~$\sqrt{7}/4$, 
these concludes the proof of the remaining parts of Theorem~\ref{TheoremCG}.

\smallskip
%%%%%%%%%%%%%%%%%%%%%%%%%%%%%%%%%%%%%%%%%%%%5
\subsection{The average distance between points 
in \texorpdfstring{$\scrC\cup\scrG$}{CuG}}\label{SubsectionAverageCG}\rule[-10pt]{0pt}{10pt}\\
% Trick to break he line and ad spacings around subsection
% 
Suppose $d=p-1$. 
The cardinality of $\scrC\cup\scrG$ is 
$4p^2$ and the average of the squares
of distances between its points is \mbox{$A(\scrC\cup\scrG)=T/(4p^2)$}, where $T$ is the following sum
\begin{equation}\label{eqAverageT123}
\begin{split}
    T=\sum_{(\fc',\fc'')\in\scrC\times\scrC}\distance^2(\fc',\fc'') 
    + \sum_{(\fg',\fg'')\in\scrG\times\scrG}\distance^2(\fg',\fg'') 
    + 2\sum_{(\fc,\fg)\in\scrC\times\scrG}\distance^2(\fc,\fg).
\end{split}    
\end{equation}
We denote by $T_1,T_2,T_3$ the sums on the right side of~\eqref{eqAverageT123}.
By~\eqref{eqSymmetryAllDistancesA}, the first sum is
\begin{equation}\label{eqAverageT1}
\begin{split}
    T_1%=\sum_{(\fc',\fc'')\in\scrC}\distance^2(\fc',\fc'') 
    = 2\sum_{h=1}^{p-1}(p-h)\cdot\big(ph(p-h)\big)
%     (p-1)p1(p-1)+(p-2)p2(p-2)+\cdots+(p-(p-1))p(p-1)(p-(p-1))
    =2p\big(pS_2(p-1)+S_3(p-1)\big)
    =\frac{p^5}{6}+O\left(p^4\right).
\end{split}    
\end{equation}
By~\eqref{eqDistanceG} and~\eqref{eqDistanceCG} 
the last two sums in~\eqref{eqAverageT123} together are
\begin{equation}\label{eqAverageT23}
\begin{split}
    T_2+T_3    = 6\sum_{h=1}^{p-1}
            h\left(\frac{p^3}{6}+O\left(p^{17/6}\log^{2/3} p\right)\right)
    = \frac{p^5}{2}+O\left(p^{29/6}\log^{2/3} p\right).
\end{split}    
\end{equation}
Now we can find the normalized distances between points in $\scrC\cup\scrG$ which is defined by 
\begin{equation}\label{eqAverageCGnormalizedDef}
\begin{split}
    A_p(\scrC\cup\scrG) := \sqrt{A(\scrC\cup\scrG)}/p^{3/2}.
\end{split}    
\end{equation}
% $A_p(\scrC\cup\scrG) = \left((A(\scrC\cup\scrG)\right)^{1/2}/p^{3/2}$
% $A_p(\scrC\cup\scrG) = \sqrt{A(\scrC\cup\scrG)}/p^{3/2}$, 
Thus, on inserting~\eqref{eqAverageT23} and~\eqref{eqAverageT1} 
into~\eqref{eqAverageT123}, we find that~\eqref{eqAverageCGnormalizedDef} becomes
\begin{equation}\label{eqAverageCGnormalized}
\begin{split}
    A_p(\scrC\cup\scrG) &= \left(\frac{1}{4p^2}
    \bigg(\frac{p^5}{6}+\frac{p^5}{2}+O\left(p^{29/6}\log^{2/3} p\right)\bigg)\right)^{1/2} 
    p^{-3/2}\\
    & = \frac{1}{\sqrt{6}}+O\left(p^{-1/6}\log^{2/3} p\right).
\end{split}    
\end{equation}

%%%%%%%%%%%%%%%%%%%%%%%%%%%%%%%%%%%%%%%%%%%%%%%%%%%%%%%
\subsection{A polytope stretched out along the longest diagonal of \texorpdfstring{$\cW$}{W}}\label{SubsectionMatrixB}\rule[-10pt]{0pt}{10pt}\\
% Trick to break he line and ad spacings around subsection
% A gleaming gem that flanks the diagonal 
% 
On the same theme, we construct a polytope $\scrB=\{\fb_0,\dots,\fb_d\}$ 
that is a cousin of $\scrC$ and $\scrG$. 
The polytope $\scrB$ streches along the diagonal $\big[(0,\dots,0);\; (d,\dots,d)\big]$, it has all vertices 
visible from each other and the normalized distances between them 
are dense in $[0,1]$, as the dimension $d$ tends to infinity.
All components of $\fb_h$ are set to be equal to $h$, except the $(h-1)$-th and 
$(h+1)$-th, which are equal to $h-1$ and $h+1$, respectively. 
Thus  
\begin{equation}\label{eqPointdh}
    \begin{array}{rlccccccccc}
      & \ \text{\scriptsize $0$-th} & &  &  &  & \text{\scriptsize $h$-th} &  &  &  & \text{\scriptsize $d$-th} \\    
\fb_h= & (\ h, & h, & \dots, & h, & h-1, & h, & h+1, & h, & \dots, & h\ ), 
    \end{array}
\end{equation}
for $h=0,1,\dots,d$, with the convention that $-1 = d$ and $d+1 = 0$.

% Let $\scrD=\{\fd_0,\dots,\fd_d\}$ be the set of points whose coordinates 
% are the rows of the following matrix:
% \begin{equation}\label{eqSymmetryMatrixD}
% M_\scrD=
%     \begin{bmatrix}
%         0 & 1 & 0 & 0 & 0 & 0 & \cdots & 0 & 0 & d \\    
%         0 & 1 & 2 & 1 & 1 & 1 & \cdots & 1 & 1 & 1 \\
%         2 & 1 & 2 & 3 & 2 & 2 & \cdots & 2 & 2 & 2 \\
%         3 & 3 & 2 & 3 & 4 & 3 & \cdots & 3 & 3 & 3 \\
%         \vdots & \vdots & \vdots & \vdots & \vdots & \vdots &\vdots & \vdots & \vdots & \vdots \\        
%         d-3 & d-3 & d-3 & d-3 & 4 & 5 & \cdots & d-2 & d-3 & d-3 \\
%         d-2 & d-2 & d-2 & d-2 & d-2 & d-2 & \cdots & d-2 & d-1 & d-2\\
%         d-1 & d-1 & d-1 & d-1 & d-1 & d-1 & \cdots & d-2 & d-1 & d \\
%         0 & d & d & d & d & d & \cdots & d & d-1 & d \\
% % 0  1  0  0  0  0  0  0  0  9
% % 0  1  2  1  1  1  1  1  1  1
% % 2  1  2  3  2  2  2  2  2  2
% % 3  3  2  3  4  3  3  3  3  3
% % 4  4  4  3  4  5  4  4  4  4
% % 5  5  5  5  4  5  6  5  5  5
% % 6  6  6  6  6  5  6  7  6  6
% % 7  7  7  7  7  7  6  7  8  7
% % 8  8  8  8  8  8  8  7  8  9
% % 0  9  9  9  9  9  9  9  8  9
%     \end{bmatrix}
% \end{equation}

If $d=3$ or $d\ge 5$ the points of $\scrB$ are visible from one another.
For points that are not too close this follows because most components of any point are equal, 
while the neighbors of just one component are the neighbor integers of the rank of the point.
For points that are near each other it can also be checked one by one that they are visible from each other.

The distances between the points of $\scrB$  are:
\begin{equation}\label{eqDistancePointsbjk}
   \distance_d(\fb_j,\fb_k) = d^{-3/2}\sqrt{d|k-j|^2 +O(1)}=\frac{|k-j|}{d}+O(1/d),
\end{equation}
for $1\le j,k,\le d$.
As a consequence,~\eqref{eqDistancePointsbjk} implies that the closure of the set of distances 
between points in $\scr{B}$ equals the full interval $[0,1]$, as $d$ tends to infinity.

Notice that all points in $\scrB$ are visible from the origin,
because~\eqref{eqPointdh} assures that condition~\eqref{eqgcd} is verified.

The distances from the origin to points in $\scrB$ are
\begin{equation}\label{eqDistancePoints0h}
   \distance_d(\bZ,\fb_h) =\frac{h}{d}+O(1/d), \text{ for $1\le h\le d$.}
\end{equation}

Since the limit set of the mutual distances between points in $\scrB$
is the full interval $[0,1]$, on combining~\eqref{eqDistancePointsbjk} 
and~\eqref{eqDistancePoints0h}, we see that there is a limit 
of the spectrum $\spec(\scrB)$, which is also a closed interval. 
Its smallest end point comes from the 
triangle with vertices $\bZ$, $\fb_{d-2}$ and~$\fb_{d-1}$,
and its largest from the 
triangle with vertices $\bZ$, $\fb_{1}$ and $\fb_{d-1}$.
Then the angle at~$\bZ$ of the first triangle tends to zero, and that 
of the second triangle tends to $\pi/2$, as $d\to\infty$.
As a consequence, the limit of the spectrum $\spec(\scrB)$ is the interval
$[0,1]$, as $d$ tends to infinity.

%%%%%%%%%%%%%%%%%%%%%%%%%%%%%%%%%%%%%%%%%%
\section{The number of pairs in \texorpdfstring{$\Omega$}{Omega}}\label{SectionOmega}

Let $(\bv,\bw)\in \Omega$ and suppose
$\bv=(v_1,\dots,v_d)$ and  $\bw=(w_1,\dots,w_d)$. Then 
$\bv$ and $\bw$ are visible from each other. This means that
$ \gcd(v_1-w_1,\dots,v_d-w_d)= 1$. 
We can rewrite this condition by bringing M\"obius summation into play. Thus, we have
\begin{equation}\label{eqMBS}
%    \begin{split}
    \sum_{
    \substack{1\le b \le N\\
    b|v_1-w_1\\
    \cdots\\
    b|v_d-w_d}}
    \mu(b)
    =\begin{cases}
    1 & \text{if $(\bv,\bw)\in\Omega$},\\[8pt]
    0 & \text{if $(\bv,\bw)\not\in\Omega$}.
    \end{cases}
%    \end{split}
\end{equation}
We start by finding an estimate for the cardinality of $\Omega$, which is the object of the following lemma.

%%%%%%%%%%%%%%%%%%%%%%%%%%%%%%%
\begin{lemma}\label{LemmaEstimateOmega}
There exists an absolute constant $C_0>0$, such that for all $d\ge 2$  and all $N\ge 3d$, we have
\begin{equation}\label{eqEstimateOmega}
    \left|
    \#\Omega - \frac{N^{2d}}{\zeta(d)}
    \right|\le
    \begin{cases}
       C_0N^{3}\log N & \text{ if $d=2$}, \\[8pt]
       C_0dN^{2d-1} & \text{ if $d\ge 3$}.
    \end{cases}
\end{equation}
% where $\zeta(d)$ is the Riemann zeta function.
\end{lemma}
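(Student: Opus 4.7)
The plan is to apply Möbius inversion via \eqref{eqMBS}, reducing $\#\Omega$ to a one-parameter sum. Since the diagonal pairs $\bv=\bw$ are excluded from $\Omega$ (because $\gcd(0,\dots,0)=0$) and since for $\bv\neq\bw$ in $\cW^2$ every common divisor of $v_1-w_1,\dots,v_d-w_d$ is at most $N$, interchanging the order of summation gives
\begin{equation*}
    \#\Omega = \sum_{b=1}^N \mu(b)\bigl(\Phi(b)^d-(N+1)^d\bigr),
\end{equation*}
where $\Phi(b) := \#\{(v,w)\in\{0,\dots,N\}^2 : v\equiv w\pmod b\}$; the factorization as a $d$-th power is the coordinate-wise independence of the congruences. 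The diagonal correction $(N+1)^d\sum_{b\leq N}\mu(b)$ is bounded trivially by $(N+1)^d\cdot N = O(N^{d+1})$, absorbed by the claimed error in either regime of~$d$.

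A direct count—writing $N+1=qb+s$ with $0\le s<b$—yields $\Phi(b)=(N+1)^2/b + E(b)$ with $0 \leq E(b) = s(b-s)/b \leq b/4$. Expanding $\Phi(b)^d$ by the binomial theorem,
\begin{equation*}
    \sum_{b=1}^N \mu(b)\Phi(b)^d = (N+1)^{2d}\sum_{b=1}^N \frac{\mu(b)}{b^d} + \sum_{k=1}^d \binom{d}{k}(N+1)^{2(d-k)}\sum_{b=1}^N\frac{\mu(b)E(b)^k}{b^{d-k}}.
\end{equation*}
For the leading piece I would complete the divisor sum to $\sum_{b\ge 1}\mu(b)/b^d = 1/\zeta(d)$ and bound the tail by $\sum_{b>N}1/b^d \leq 1/((d-1)N^{d-1})$, which costs $O(N^{d+1})$; replacing $(N+1)^{2d}$ with $N^{2d}$ in the resulting main term incurs an extra $O(dN^{2d-1})$.

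Each error term with $k\ge 1$ is handled by the trivial bound $|E(b)|\leq b$, reducing it to $\binom{d}{k}(N+1)^{2(d-k)}\sum_{b=1}^N b^{2k-d}$, in which the inner sum is $O(1)$ for $2k\leq d-2$, $O(\log N)$ for $2k=d-1$, and $O(N^{2k-d+1})$ for $2k\geq d$. The small-$k$ regime is dominated by $k=1$ and contributes $O(dN^{2d-2})$ (with a logarithm only when $d=3$), while the large-$k$ regime telescopes to $O(2^d N^{d+1})$. The main obstacle is to verify that under the hypothesis $N\geq 3d$ both contributions are absorbed into $C_0 dN^{2d-1}$ for $d\geq 3$; the binding inequality is $2^d N^{d+1} \leq C\, dN^{2d-1}$, i.e.\ $2^d \leq C\, dN^{d-2}$, which holds since $N^{d-2}\geq (3d)^{d-2}$ vastly outgrows $2^d$ for $d\geq 3$. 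The case $d=2$ is routine: the $k=1$ and $k=2$ contributions are each $O(N^3)$, well within the stated $O(N^3\log N)$.
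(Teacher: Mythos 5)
Your proof is correct and follows essentially the same route as the paper: M\"obius inversion over the common divisor $b$, factorization of the congruence count over the $d$ coordinates, and a binomial expansion of the resulting $d$-th power against the claimed error bounds. Your write-up is in fact slightly more careful than the paper's in two respects --- you explicitly subtract the diagonal $\bv=\bw$ (where the indicator identity degenerates to a Mertens sum $\sum_{b\le N}\mu(b)$ rather than $0$) and you use the exact remainder $E(b)=s(b-s)/b\le b/4$ instead of an $O(N)$ error per coordinate, which is why you obtain $O(N^3)$ rather than the stated $O(N^3\log N)$ at $d=2$.
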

%%%%%%%%%%%%%%%%%%%%%%%%%%%%%%%%%
\begin{proof}
By the definition and the counting formula~\eqref{eqMBS}, by changing the order of summation, we have
\begin{equation*}
    \begin{split}
    \#\Omega&=  \sum_{\bv\in\cW}\sum_{\substack{\bw\in\cW\\(\bv,\bw)\in\Omega}}1 
    =  \sum_{\bv\in\cW}\sum_{\bw\in\cW}\sum_{\substack{b|(v_1-w_1)\\\cdots\\b|(v_d-w_d)}}\mu(d) 
    = \sum_{b=1}^N\mu(b)
        \sum_{\substack{0\le v_1, w_1\le N \\b| v_1-w_1}}\cdots
        \sum_{\substack{0\le v_d, w_d\le N\\b| v_d-w_d}} 1.
    \end{split}
\end{equation*}    
Since the variables run independently, the summation over $\bv$ and $\bw$ 
can be grouped as a product as follows
\begin{equation}\label{eqO2}
    \begin{split}
        \#\Omega
        &=\sum_{b=1}^N\mu(b)\bigg(\sum_{\substack{0\le v_1, w_1\le N\\b| v_1-w_1}} 1 
        \bigg)\cdots
        \bigg(\sum_{\substack{0\le v_d, w_d\le N\\b| v_d-w_d}} 1 
        \bigg)\,.
    \end{split}
\end{equation}
Next, we estimate the inner sums that are equal to each other for all $j\in\{1,\dots,d\}$. 
Dropping the subscripts, we see that each of them is equal to
\begin{equation*}%\label{eqdefH}
    \begin{split}
     H:=\sum_{\substack{0\le v, w\le N\\b| v-w}}1&=
     \sum_{0\le r\le b-1}\#\{
     (v,w) : 0\le v,w\le N, v\equiv r\pmod b, w\equiv r\pmod b
     \}\\
     & = \sum_{0\le r\le b-1}\big(
     \#\{0\le a\le N \; :\; a\equiv r\pmod b\}
     \big)^2.
    \end{split}
\end{equation*}
The cardinality of the inner set is equal to 
$\left\lfloor\frac{N-r}{b}\right\rfloor+1=\frac{N}{b}+\theta_1(b,r,N)$, with $|\theta(b,r,N)|\le 1$.
Then
\begin{equation*}
   H =  \sum_{0\le r\le b-1}\left(\left\lfloor\frac{N-r}{b}\right\rfloor+1    \right)^2
    =  \sum_{0\le r\le b-1}\left( \frac{N^2}{b^2} + \theta_2(b,r,N)\cdot \frac{N}{b}\right) 
    = \frac{N^2}{b} + \theta(b,N)N,
\end{equation*}
for some real numbers for which $|\theta_2(b,r,N)|\le 3$ and $|\theta(b,N)|\le 3.$ 
On inserting this estimate in~\eqref{eqO2}, it yields
\begin{equation}\label{eqO3}
    \begin{split}
        \#\Omega&=\sum_{b=1}^N\mu(b)H^d
        =\sum_{b=1}^N\mu(b)\left(\frac{N^{2d}}{b^{d}} 
        + O\left(\sum_{k=1}^d\binom{d}{k}\bigg(\frac{N^2}{b}\bigg)^{d-k}(3N)^k\right)\right).
    \end{split}
\end{equation}
Here, the main term is
\begin{equation}\label{eqMTO4}
    \begin{split}
        \sum_{b=1}^N\mu(b)\frac{N^{2d}}{b^{d}}&=
        N^{2d}\sum_{b=1}^\infty\mu(b)b^{-d}+O\left(N^{2d}\int_{N}^\infty x^{-d}\dd x \right)
        =\frac{N^{2d}}{\zeta(d)}+O\left(\frac{N^{d+1}}{d} \right).
    \end{split}
\end{equation}
Denoting the error term in~\eqref{eqO3} by $E_1$ and changing the order of summation we find that
\begin{equation}\label{eqE11}
    \begin{split}
        |E_1|
        &= O\left(\sum_{b=1}^N\sum_{k=1}^d\binom{d}{k}\frac{N^{2d-2k}}{b^{d-k}}3^kN^k\right)
        =  O\left(\sum_{k=1}^d 3^kN^{2d-k}\binom{d}{k}T_{d-k}(N)\right),
    \end{split}
\end{equation}
where \mbox{$T_r(N):= 1^{-r}+\cdots+N^{-r}$}. 
Then $T_{d-k}(N)=N$, if $k=d$, $T_{d-k}(N)=\log N$, if $k=d-1$ and $T_{d-k}(N)=O(1)$ if $1\le k\le d-2$.
Then
\begin{equation}\label{eqE12}
    \begin{split}
        |E_1| & = O\left(3^dN^{d+1}\right)  + O\left(3^dN^{d+1}d\log N\right)+
         O\left(N^{2d}\sum_{k=1}^{d-2} 3^kN^{-k}\binom{d}{k}\right),
    \end{split}
\end{equation}

The lemma follows by inserting the estimates~\eqref{eqMTO4} and~\eqref{eqE12} into~\eqref{eqO3}.

\end{proof}

%%%%%%%%%%%%%%%%%%%%%%%%%%%%%%%%%%%%%%%%%%%%%%%%%%%%%%%%%%%%%%%%%%%%%%%%%%%%%%%%%%%%%%
\section{The average distance between points visible from each other}\label{SectionAv}
The average of the square of distances between points visible from each other is
\begin{equation}\label{eqAv110}
    \begin{split}
        A_{vis}(d,N)&:=\frac{1}{\#\Omega}
    \sum_{(\bv,\bw)\in\Omega}\distance^2(\bv,\bw).
    \end{split}
\end{equation}
By~\eqref{eqMBS}, changing the order summation this is
\begin{equation}\label{eqAv33}
    \begin{split}
        A_{vis}(d,N)&=\frac{1}{\#\Omega}\sum_{b=1}^N\mu(b)
    \sum_{\bv\in\cW}\sum_{\substack{\bw\in\cW\\b|(v_1-w_1)\\
        \cdots\\
    b|(v_d-w_d)
    }}\distance^2(\bv,\bw).
    \end{split}
\end{equation}
We rewrite~\eqref{eqAv33} as
\begin{equation}\label{eqAv11}
    \begin{split}
      A_{vis}(d,N)&=\frac{1}{\#\Omega}
      \sum_{b=1}^N
      \mu(b)
    \sum_{\bv\in\cW}\sum_{\substack{\bw\in\cW\\b|(v_1-w_1)\\
        \cdots\\
    b|(v_d-w_d)
    }}\left((w_1-v_1)^2+\cdots+(w_d-v_d)^2\right)
    =\sum_{j=1}^d H_j,
    \end{split}
\end{equation} 
where
\begin{equation}\label{eqAv12}
    \begin{split}
      H_j&:=\frac{1}{\#\Omega}
      \sum_{b=1}^N
      \mu(b)
    \sum_{\bv\in\cW}\sum_{\substack{\bw\in\cW\\b|(v_1-w_1)\\
        \cdots\\
    b|(v_d-w_d)
    }}(w_j-v_j)^2.
    \end{split}
\end{equation} 
By changing the order of summation to isolate the part that does not depend on the $j$ variables, 
$H_j$ can be rewritten as 
\begin{equation}\label{eqAv13}
    \begin{split}
      H_j&=\frac{1}{\#\Omega}
      \sum_{b=1}^N       \mu(b)\bigg(
          \sum_{\substack{0\le v_j,w_j\le N\\b|(v_j-w_j)     }}(w_j-v_j)^2\bigg)
    \prod_{\substack{k=1\\k\not=j}}^d
           \bigg(\sum_{\substack{0\le v_k,w_k\le N\\b|(v_k-w_k)     }} 1\bigg)\\
    &=\frac{1}{\#\Omega}
      \sum_{b=1}^N       \mu(b)
      \left(\frac{N^2}{b}+O(N)\right)^{d-1}
      \sum_{\substack{0\le v_j,w_j\le N\\b|(v_j-w_j)     }}(w_j-v_j)^2.
    \end{split}
\end{equation} 
In the interior sum from~\eqref{eqAv13} we group the terms with $v_j$ and $w_j$ 
in the same residue classes mod $b$ as follows:
\begin{equation}\label{eqAv14}
    \begin{split}
      \sum_{\substack{0\le v_j,w_j\le N\\b|(v_j-w_j)     }}(w_j-v_j)^2
      &= \sum_{r=0}^{b-1}
        \sum_{\substack{0\le v_j,w_j\le N\\v_j\equiv w_j\equiv r \pmod{b}}}(w_j-v_j)^2\\
      &= \sum_{r=0}^{b-1}
        \sum_{m=0}^{\left\lfloor\frac{N-r}{b}\right\rfloor}
        \sum_{n=0}^{\left\lfloor\frac{N-r}{b}\right\rfloor}
        \big((r+mb)-(r+nb)\big)^2\\
       &= b^2\sum_{r=0}^{b-1} 
        \sum_{m=0}^{\left\lfloor\frac{N-r}{b}\right\rfloor}
        \sum_{n=0}^{\left\lfloor\frac{N-r}{b}\right\rfloor}
        (m-n)^2.
    \end{split}
\end{equation} 
With $M=\left\lfloor\frac{N-r}{b}\right\rfloor$, 
the sums over $m$ and $n$ are equal to 
\begin{equation}\label{eqAv15}
    \begin{split}
    \sum_{m=0}^{M} \sum_{n=0}^{M} (m-n)^2
%      & = \sum_{m=0}^{M} \sum_{n=0}^{M} (m^2-2mn+n^2)\\
      & = 2\sum_{m=0}^{M}\sum_{n=0}^{M} m^2 -2 \sum_{m=0}^{M}\sum_{n=0}^{M}mn\\
%           +\sum_{m=0}^{M}\sum_{n=0}^{M} n^2
        & = 2\left(\frac{1}{3}M^4 + O(M^3)\right) -2\left(\frac{1}{4}M^4 + O(M^3)\right)\\
        & = \left(\frac{1}{6}M^4 + O(M^3)\right).
    \end{split}
\end{equation} 
Combining~\eqref{eqAv15} into~\eqref{eqAv14} we find that
\begin{equation}\label{eqAv155}
    \begin{split}
    \sum_{\substack{0\le v_j,w_j\le N\\b|(v_j-w_j)     }}(w_j-v_j)^2
      &=\frac{N^4}{6b} + O(N^3).
    \end{split}
\end{equation}
On inserting this estimate in~\eqref{eqAv13}, we obtain
\begin{equation}\label{eqAv16}
    \begin{split}
      H_j
    &=\frac{1}{\#\Omega}
      \sum_{b=1}^N       \mu(b)
      \left(\frac{N^2}{b}+O(N)\right)^{d-1}
%       b^3\left(\frac{N^4}{6b^4} + O\bigg(\frac{N^3}{b^3}\bigg)\right)
      \left(\frac{N^4}{6b} + O(N^3)\right)\\
     &=\frac{N^{2d+2}}{6\cdot\#\Omega} \sum_{b=1}^N   \frac{\mu(b)}{b^d}  
            \left(1 + O\bigg(\frac{b}{N}\bigg)\right)^d\\
     &=\frac{N^{2d+2}}{6\cdot\#\Omega} \left(\frac{1}{\zeta(d)}+O\bigg(\frac{1}{dN^{d-1}}\bigg)
     + \sum_{b=1}^N \frac{1}{b^d} \sum_{k=1}^d \binom{d}{k}\bigg(\frac{b}{N}\bigg)^k\right).
    \end{split}
\end{equation} 
Here the interior sums are
\begin{equation*}%\label{eqAv17}
    \begin{split}
     \sum_{b=1}^N \frac{1}{b^d} \sum_{k=1}^d \binom{d}{k}\bigg(\frac{b}{N}\bigg)^k
     =\begin{cases}
         O\left(\frac{\log N}{N}\right)& \text{ if $d=2$},\\[8pt]
         O\left(\frac{d}{N}\right)& \text{ if $d\ge 3$}.
      \end{cases}
    \end{split}
\end{equation*} 
Introducing this estimate in~\eqref{eqAv16} and the result in~\eqref{eqAv11} we summarize 
in the next lemma the estimate obtained for $A_{vis}(d,N)$. 
\begin{lemma}\label{LemmaAvisOmega}
   We have
   \begin{equation}\label{eqAisEnd1}
     A_{vis}(d,N)= 
     \begin{cases}
               \frac{dN^{2d+2}}{6\#\Omega\zeta(d)}
        \left(1+O\left(\frac {\log N}{N}\right)\right) & \text{ if $d= 2$},\\[8pt]
         \frac{dN^{2d+2}}{6\#\Omega\zeta(d)}\left(
         1+O\left(\frac dN\right)\right) & \text{ if $d\ge 3$}.
     \end{cases}
\end{equation}
\end{lemma}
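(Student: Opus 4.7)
The plan is to convert $A_{vis}(d,N)$ from a sum over the intractable set $\Omega$ into a sum indexed by the potential common divisor $b$ of coordinate differences via M\"obius inversion, exploit the coordinate symmetry of $\cW$ to reduce the $d$-dimensional calculation to a scalar one, and then asymptotically evaluate the resulting M\"obius--zeta sum.

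First, I would substitute $\distance^2(\bv,\bw)=\sum_{j=1}^d(w_j-v_j)^2$ in the definition \eqref{eqAv110} and insert the M\"obius indicator \eqref{eqMBS} to drop the constraint $(\bv,\bw)\in\Omega$. Interchanging summations produces $A_{vis}(d,N)=\sum_{j=1}^d H_j$ with $H_j$ as in \eqref{eqAv12}. The symmetry of $\cW$ under coordinate permutations forces $H_1=\cdots=H_d$, so it suffices to compute $H_1$ and multiply by $d$, which accounts for the leading factor of $d$ in \eqref{eqAisEnd1}.

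Second, I would evaluate $H_1$. Its inner factors split into one ``quadratic" factor in the $j=1$ coordinate and $d-1$ identical ``counting" factors, each of which equals $N^2/b+O(N)$ by exactly the arithmetic-progression counting already carried out in the proof of Lemma~\ref{LemmaEstimateOmega}. The quadratic factor is evaluated by partitioning pairs by residue class $r\pmod b$: writing $v_1=r+mb$, $w_1=r+nb$ and $M=\lfloor(N-r)/b\rfloor$ turns the summand into $b^2(m-n)^2$, and the elementary identity $\sum_{0\le m,n\le M}(m-n)^2=\tfrac{M^4}{6}+O(M^3)$ followed by summation over $r$ yields $N^4/(6b)+O(N^3)$.

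Third, assembling these ingredients gives
\begin{equation*}
H_1=\frac{N^{2d+2}}{6\,\#\Omega}\sum_{b=1}^N\frac{\mu(b)}{b^d}\bigl(1+O(b/N)\bigr)^d.
\end{equation*}
Completing the series supplies $\sum_{b\le N}\mu(b)/b^d=1/\zeta(d)+O(1/(dN^{d-1}))$, and expanding $(1+O(b/N))^d$ by the binomial theorem reduces the tail to controlling $\binom{d}{k}N^{-k}\sum_{b\le N}b^{k-d}$ for $1\le k\le d$. The inner sum is $O(1)$ for $k\le d-2$, of order $\log N$ when $k=d-1$, and of order $N$ when $k=d$. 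After aggregating, the dominant relative error is $O(d/N)$ coming from the $k=1$ term whenever $d\ge 3$, whereas in dimension $d=2$ the index $k=1$ coincides with $k=d-1$ and therefore inherits an extra $\log N$, producing the $O(\log N/N)$ bound. Multiplying $H_1$ by $d$ delivers \eqref{eqAisEnd1}.

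The main obstacle is not conceptual but a bookkeeping check: one must track the binomial tail carefully enough to isolate the logarithmic anomaly that forces the case split at $d=2$, and verify that the $b$-uniform constants hidden in the $O(b/N)$ factor, when raised to the $d$-th power, do not conspire to multiply the error by a factor exponential in $d$. Once this has been verified, the formula for $A_{vis}(d,N)$ is just the combination of the explicit calculations \eqref{eqAv13}--\eqref{eqAv16} already prepared.
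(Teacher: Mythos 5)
Your proposal follows essentially the same route as the paper: Möbius inversion via \eqref{eqMBS}, splitting $\distance^2$ into coordinates to get $A_{vis}=\sum_j H_j$ with identical $H_j$, factoring each $H_j$ into one quadratic factor (evaluated by residue classes via $\sum_{0\le m,n\le M}(m-n)^2=\tfrac{M^4}{6}+O(M^3)$, giving $\tfrac{N^4}{6b}+O(N^3)$) times $d-1$ counting factors $\tfrac{N^2}{b}+O(N)$, and then the same completion and binomial-tail analysis of $\sum_b\mu(b)b^{-d}(1+O(b/N))^d$ that isolates the $O(\log N/N)$ anomaly at $d=2$ versus $O(d/N)$ for $d\ge 3$. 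The argument is correct and matches the paper's proof step for step.
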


Taking into account the size of the cardinality of 
$\Omega$ evaluated in Lemma~\ref{LemmaEstimateOmega} 
into~\eqref{eqAisEnd1}, it yields
the following simple estimate for $A_{vis}(d,N)$.
%%%%%%%%%%%%%%%%%%%%%%%%%%%%%%%
\begin{lemma}\label{LemmaEstimateAvis}
There exists an absolute constant $C_1>0$, such that for all $d\ge 2$  and all $N\ge 3d$, we have
\begin{equation}\label{eqAvis}
    \left|
    A_{vis}(d,N)- \frac{dN^{2}}{6}
    \right|\le
    \begin{cases}
       C_1N\log N & \text{ if $d=2$}, \\[8pt]
       C_1d^2N & \text{ if $d\ge 3$}.
    \end{cases}
\end{equation}
\end{lemma}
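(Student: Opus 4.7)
The plan is to deduce Lemma~\ref{LemmaEstimateAvis} by substituting the estimate for $\#\Omega$ from Lemma~\ref{LemmaEstimateOmega} into the asymptotic formula for $A_{vis}(d,N)$ provided by Lemma~\ref{LemmaAvisOmega}. Concretely, Lemma~\ref{LemmaAvisOmega} can be rewritten as
\begin{equation*}
    A_{vis}(d,N) = \frac{dN^2}{6}\cdot \frac{N^{2d}/\zeta(d)}{\#\Omega}\cdot \bigl(1+O(\eta(d,N))\bigr),
\end{equation*}
where for brevity I set $\eta(2,N):=(\log N)/N$ and $\eta(d,N):=d/N$ for $d\ge 3$. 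The task then reduces to showing that the ratio $(N^{2d}/\zeta(d))/\#\Omega$ equals $1+O(\eta(d,N))$ under the hypothesis $N\ge 3d$.

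For this, I would divide~\eqref{eqEstimateOmega} through by the main term $N^{2d}/\zeta(d)$. Using that $1\le \zeta(d)\le \zeta(2)=\pi^2/6$ uniformly in $d\ge 2$, the resulting error is $\zeta(2)\cdot C_0 N^3\log N/N^{4}=O((\log N)/N)$ in the case $d=2$, and $\zeta(d)\cdot C_0 dN^{2d-1}/N^{2d}=O(d/N)$ for $d\ge 3$. In both regimes this is $O(\eta(d,N))$, so
\begin{equation*}
    \frac{\#\Omega}{N^{2d}/\zeta(d)} = 1 + O\bigl(\eta(d,N)\bigr).
\end{equation*}
The hypothesis $N\ge 3d$ keeps $\eta(d,N)$ bounded by a constant strictly less than $1$, so the ratio is bounded away from zero and the reciprocal satisfies $(N^{2d}/\zeta(d))/\#\Omega = 1 + O(\eta(d,N))$ as well.

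Substituting into the display above yields
\begin{equation*}
    A_{vis}(d,N) = \frac{dN^2}{6}\bigl(1+O(\eta(d,N))\bigr) = \frac{dN^2}{6} + O\bigl(dN^2\eta(d,N)\bigr),
\end{equation*}
whose error equals $O(N\log N)$ when $d=2$ and $O(d^2 N)$ when $d\ge 3$, matching the claim with some absolute constant $C_1$.

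I do not anticipate any genuine obstacle here; the argument is a direct arithmetic combination of the two preceding lemmas. The only point that deserves a moment of care is to verify that the two error contributions—one from the approximation in Lemma~\ref{LemmaAvisOmega} and one from inverting the estimate in Lemma~\ref{LemmaEstimateOmega}—have the same order of magnitude in each of the two regimes $d=2$ and $d\ge 3$. This is precisely the reason both can be packaged into a single $\eta(d,N)$, and it is also the reason that the final bound splits into two cases exactly as in the statement.
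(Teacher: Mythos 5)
Your proposal is correct and follows exactly the route the paper takes: the paper derives Lemma~\ref{LemmaEstimateAvis} in one sentence by inserting the estimate of $\#\Omega$ from Lemma~\ref{LemmaEstimateOmega} into~\eqref{eqAisEnd1}, which is precisely the substitution you carry out (your write-up just makes explicit the inversion of the ratio and the matching of error orders that the paper leaves implicit).
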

%%%%%%%%%%%%%%%%%%%%%%%%%%%%%%%%%

%%%%%%%%%%%%%%%%%%%%%%%%%%
\section{The second moment about the mean}\label{SectionM2}
\noindent
The second moment about the mean $A_{vis}(d,N)$ is the average of the squares of the differences 
between the expected and the true distance between the pairs of points from $\cW$ 
that are visible from each other, that is,
\begin{equation}\label{eqM221}
    \begin{split}
      \fM_{2,vis}(d,N):=\frac{1}{\#\Omega}
    \sum_{\bv\in\cW}\sum_{\substack{\bw\in\cW\\(\bv,\bw)\in\Omega}}
    \left|\distance^2(\bv,\bw)-A_{vis}(d,N)\right|^2.
    \end{split}
\end{equation}
Replacing the coprimality condition by means of the characteristic function~\eqref{eqMBS} 
and changing the order of summation, we have
\begin{equation*}
       \begin{split}
      \fM_{2,vis}(d,N)=&\frac{1}{\#\Omega}
      \sum_{b=1}^N\mu(b)
      \sum_{\bv\in\cW}\sum_{\substack{\bw\in\cW\\b|(v_1-w_1)\\
        \cdots\\
    b|(v_d-w_d)
    }}
    \left|\distance^2(\bv,\bw)-A_{vis}(d,N)\right|^2.
    \end{split}
\end{equation*}
Next, by expanding the square it yields
\begin{equation}\label{eqM2211}
    \begin{split}
      \fM_{2,vis}(d,N)   
    =& \frac{1}{\#\Omega}
    \sum_{b=1}^N\mu(b)
      \sum_{\bv\in\cW}\sum_{\substack{\bw\in\cW\\b|(v_1-w_1)\\
        \cdots\\
    b|(v_d-w_d)
    }}\distance^4(\bv,\bw)\\
    & -\frac{2A_{vis}(d,N)}{\#\Omega}
    \sum_{b=1}^N\mu(b)
      \sum_{\bv\in\cW}\sum_{\substack{\bw\in\cW\\b|(v_1-w_1)\\
        \cdots\\
    b|(v_d-w_d)
    }}\distance^2(\bv,\bw)\\
    & + \frac{A^2_{vis}(d,N)}{\#\Omega}
     \sum_{b=1}^N\mu(b)
      \sum_{\bv\in\cW}\sum_{\substack{\bw\in\cW\\b|(v_1-w_1)\\
        \cdots\\
    b|(v_d-w_d)
    }}1\\
    =& \frac{1}{\#\Omega}\cdot\Sigma_{vis} - A^2_{vis}(d,N).
    \end{split}
\end{equation}
Here we have denoted by $\Sigma_{vis}$ the multiple sum over $\bv$ and $\bw$ from the first row of 
relation~\eqref{eqM2211} and have taken into account the fact that the term from the second row 
is equal to $-2A^2_{vis}(d,N)$, while the term from the third row is equal to  $A^2_{vis}(d,N)$. 
Next, changing the order of summation, we split~$\Sigma_{vis}$ into $d^2$ similar sums $H_{j,k}$
\begin{equation}\label{eqM2a22}
    \begin{split}
      \Sigma_{vis} &= \sum_{b=1}^N\mu(b)\sum_{\bv\in\cW}
            \sum_{\substack{\bw\in\cW\\b|(v_1-w_1)\\\cdots\\b|(v_d-w_d)}}
      \sum_{j=1}^d\sum_{k=1}^d(w_j-v_j)^2(w_k-v_k)^2
      = \sum_{j=1}^d\sum_{k=1}^d H_{j,k},
    \end{split}
\end{equation}
where 
\begin{equation}\label{eqM2a23}
    \begin{split}
    H_{j,k}  &= \sum_{b=1}^N\mu(b)\sum_{\bv\in\cW}
            \sum_{\substack{\bw\in\cW\\b|(v_1-w_1)\\\cdots\\b|(v_d-w_d)}}
                                (w_j-v_j)^2(w_k-v_k)^2.                    
    \end{split}
\end{equation}
Now fix $j\neq k$.
In each $H_{j,k}$ the summand depends only on four of the $2d$ variables $v_1,\dots,v_d,w_1,\dots,w_d$,
so that
\begin{equation}\label{eqM2a24}
    \begin{split}
    H_{j,k} &= \sum_{b=1}^N\mu(b)\bigg(
        \sum_{\substack{0\le v_j,w_j\le N\\b|(v_j-w_j)}}
        \sum_{\substack{0\le v_k,w_k\le N\\b|(v_k-w_k)}}
                                (w_j-v_j)^2(w_k-v_k)^2\bigg)    
             \prod_{\substack{s=1\\s\not=j\\s\not=k}}^d
            \bigg(\sum_{\substack{0\le v_s,w_s\le N\\b|(v_s-w_s)     }} 1\bigg).
     \end{split}
\end{equation}
Since the products count the number of terms in some arithmetic progressions and are equal, 
we derive that
\begin{equation}\label{eqM2a25}
    \begin{split}
    H_{j,k} &= \sum_{b=1}^N\mu(b)  \left(\frac{N^2}{b}+O(N)\right)^{d-2}
     \sum_{\substack{0\le v_j,w_j\le N\\b|(v_j-w_j)}}
        \sum_{\substack{0\le v_k,w_k\le N\\b|(v_k-w_k)}}
                                (w_j-v_j)^2(w_k-v_k)^2                
    \end{split}
\end{equation}
By relation~\eqref{eqAv155}, we find that the interior sums are 
\begin{equation}\label{eqM2a255}
    \begin{split}
         \sum_{\substack{0\le v_j,w_j\le N\\b|(v_j-w_j)}}
        \sum_{\substack{0\le v_k,w_k\le N\\b|(v_k-w_k)}}
                                (w_j-v_j)^2(w_k-v_k)^2   
        =\frac{N^8}{36b^2} +O\left(N^7/b\right).                        
    \end{split}
\end{equation}
On combining~\eqref{eqM2a255} and~\eqref{eqM2a25}, it follows that
\begin{equation}\label{eqM2a26}
    \begin{split}
    H_{j,k} &= \sum_{b=1}^N\mu(b)  \left(\frac{N^2}{b}+O(N)\right)^{d-2}
     \left(\frac{N^8}{36b^2} +O\left(N^7/b\right)\right)\\
     &=\frac{N^{2d+4}}{36} \sum_{b=1}^N   \frac{\mu(b)}{b^{d}}  
            \left(1 + O\bigg(\frac{b}{N}\bigg)\right)^{d}.
    \end{split}
\end{equation}
Following the reasoning from~\eqref{eqAv16} and the relation that follows, we obtain the following estimate
\begin{equation}\label{eqM2a27}
    \begin{split}
    H_{j,k} =\begin{cases}
    \frac{N^{2d+4}}{36\zeta(d)} \left(1+
         O\left(\frac{\log N}{N}\right)\right)& \text{ if $d=2$},\\[8pt]
    \frac{N^{2d+4}}{36\zeta(d)}  \left(1+
         O\left(\frac{d}{N}\right)\right)& \text{ if $d\ge 3$}.
      \end{cases}
    \end{split}
\end{equation}

If $j=k$, adapting the same steps after relation~\eqref{eqM2a23} we obtain the upper bound
\begin{equation}\label{eqM2a28}
    \begin{split}
    H_{j,j} =O(N^{2d+4}), \text{ for $1\le j \le d$.}
    \end{split}
\end{equation}
Then on inserting~\eqref{eqM2a28} and~\eqref{eqM2a27} into~\eqref{eqM2a22}, yields
\begin{equation}\label{eqM2a30}
    \begin{split}
    \Sigma_{vis}= \frac{d^2N^{2d+4}}{36\zeta(d)} 
    \left(1+O\left(\frac{1}{d}+\frac{d}{N}\right)\right).
    \end{split}
\end{equation}
On combining~\eqref{eqM2a30}, \eqref{eqM2211}, Lemma~\ref{LemmaEstimateOmega}
and Lemma~\ref{LemmaEstimateAvis}, we obtain the following result.

%%%%%%%%%%%%%%%%%%%%%%%%%%%%%%%
\begin{lemma}\label{LemmaEstimateM2vis}
There exists an absolute constant $C_2>0$, such that for all $d\ge 2$  and all $N\ge 3d$, we have
\begin{equation}\label{eqAis2}
   \fM_{2,vis}(d,N)\le C_2(dN^4 +d^3N^3).
%    \begin{cases}
%       C_2N^3\log N & \text{ if $d=2$}, \\[8pt]
%        C_2(dN^4 +d^3N^3)& \text{ if $d\ge 3$}.
%    \end{cases}
\end{equation}
\end{lemma}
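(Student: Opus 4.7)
The plan is to combine the formula~\eqref{eqM2211}, which already rewrites
\[
\fM_{2,vis}(d,N)=\frac{\Sigma_{vis}}{\#\Omega}-A^2_{vis}(d,N),
\]
with the three estimates that we have just obtained: the asymptotic~\eqref{eqM2a30} for $\Sigma_{vis}$, the bound from Lemma~\ref{LemmaEstimateOmega} for $\#\Omega$, and the bound from Lemma~\ref{LemmaEstimateAvis} for $A_{vis}(d,N)$. The entire argument is then an arithmetic of error terms: the main term of $\Sigma_{vis}/\#\Omega$ must cancel exactly with the main term of $A^2_{vis}(d,N)$, and what remains should be $O(dN^4+d^3N^3)$.

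First I would divide~\eqref{eqM2a30} by $\#\Omega$. By Lemma~\ref{LemmaEstimateOmega}, in the range $d\ge 3$ and $N\ge 3d$ one has $\#\Omega=(N^{2d}/\zeta(d))(1+O(d/N))$, and for $d=2$ the relative error is $O(N^{-1}\log N)$, which is absorbed into $O(1/d)$ anyway. Combining this with~\eqref{eqM2a30} and expanding the product of error factors yields
\[
\frac{\Sigma_{vis}}{\#\Omega}=\frac{d^2N^4}{36}\Bigl(1+O\Bigl(\frac{1}{d}+\frac{d}{N}\Bigr)\Bigr)=\frac{d^2N^4}{36}+O(dN^4+d^3N^3).
\]

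Second, I would square the estimate from Lemma~\ref{LemmaEstimateAvis}. Writing $A_{vis}(d,N)=\tfrac{dN^2}{6}+E$ with $E=O(d^2N)$ for $d\ge 3$ (and $E=O(N\log N)$ for $d=2$, which is harmless), the cross term $\tfrac{dN^2}{3}E$ is $O(d^3N^3)$ and the square $E^2$ is $O(d^4N^2)$, which is smaller. Thus
\[
A^2_{vis}(d,N)=\frac{d^2N^4}{36}+O(d^3N^3).
\]
Subtracting the two displays, the main terms $\tfrac{d^2N^4}{36}$ cancel exactly, and the bound $|\fM_{2,vis}(d,N)|\le C_2(dN^4+d^3N^3)$ follows.

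The only step that could feel delicate is making sure the two $\tfrac{d^2N^4}{36}$ terms agree before subtraction; this is automatic because both originate from the same constant $\tfrac{1}{6}$ produced by~\eqref{eqAv155} (once as a square, once as a fourth moment of the same arithmetic-progression differences). Everything else is bookkeeping of error terms, and the bound stated in the lemma is precisely the larger of the two dominant errors $dN^4$ (from the $1/d$ factor in~\eqref{eqM2a30}) and $d^3N^3$ (from both the $d/N$ factor in~\eqref{eqM2a30} and from the cross term in $A^2_{vis}$).
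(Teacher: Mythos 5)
Your proposal is correct and follows exactly the route the paper takes: the paper's proof of Lemma~\ref{LemmaEstimateM2vis} is literally the one-line instruction to combine~\eqref{eqM2a30}, \eqref{eqM2211}, Lemma~\ref{LemmaEstimateOmega} and Lemma~\ref{LemmaEstimateAvis}, and your bookkeeping (cancellation of the two $d^2N^4/36$ main terms, dominance of the $dN^4$ and $d^3N^3$ error terms, absorption of the $d=2$ logarithms) is the intended content of that step.
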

%%%%%%%%%%%%%%%%%%%%%%%%%%%%%%%%%

%%%%%%%%%%%%%%%%%%%%%%%%%%
\section{Effective results and the proofs of Theorems~\ref{Theorem1}, \ref{Theorem2} and Corollary~\ref{Corollary1}}\label{SectionProofT1T2C1}

% Let $T>0$ be a parameter whose precise value will be chosen later.
% Scaling, in order to have all spacing measured by the normalized distance situated in the interval $[0,1]$, 
% the exact expression of $M_2(d)$~\eqref{eqM2a7} implies 
%
We scale the bound for $\mathfrak{M}_{2,vis}(d,N)$ from Lemma~\ref{LemmaEstimateM2vis} by $d^2N^4$, 
in order to have all spacings between points measured by the normalized distance 
situated in the interval $[0,1]$.
Note first that for any $d\ge 2$ and any $N\ge 3d$, we have 
\begin{equation*}
    \frac{\mathfrak{M}_{2,vis}(d,N)}{d^2N^4}\le C_2\left(\frac 1d +\frac dN\right).
%    \begin{cases}
%       \frac{C_2\log N}{N} & \text{ if $d=2$}, \\[8pt]
%        C_2\left(\frac 1d +\frac dN\right)& \text{ if $d\ge 3$}.
%    \end{cases}
\end{equation*}
Then, on combining the above inequalities with Lemma~\ref{LemmaEstimateAvis}, 
there is an absolute constant $C_3>0$ such that
\begin{equation}\label{eqPT1}
    \frac{1}{\#\Omega}
    \sum_{(\bv,\bw)\in\Omega}\left(\distance_d^2(\bv,\bw)-\sdfrac{1}{6}\right)^2
    \le  C_3\left(\frac 1d +\frac dN\right).
\end{equation}
Now, for any parameters $a,T>0$, imposing supplementary conditions on the summation, we find the following lower bounds of the left-side term of the inequality~\eqref{eqPT1}:
\begin{equation}\label{eqPT12}
    \begin{split}
    \frac{1}{\#\Omega} \sum_{(\bv,\bw)\in\Omega}
            \left(\distance_d^2(\bv,\bw)-\sdfrac{1}{6}\right)^2
    &\ge  \frac{1}{\#\Omega} 
      \sum_{\substack{(\bv,\bw)\in\Omega\\
        \big|\distance_d^2(\bv,\bw)-\frac{1}{6}
        \big|\ge \frac 1{aT}}}
            \left(\distance_d^2(\bv,\bw)-\sdfrac{1}{6}\right)^2 \\
      &\ge  \frac{1}{\#\Omega} 
      \sum_{\substack{(\bv,\bw)\in\Omega\\
        \big|\distance_d^2(\bv,\bw)-\frac{1}{6}
        \big|\ge \frac 1{aT}}}\frac{1}{a^2T^2}.
    \end{split}
\end{equation}
Then, on combining~\eqref{eqPT1} and~\eqref{eqPT12}, we find that 
\begin{equation}\label{eqPT2}
    \frac{1}{\#\Omega}\#\left\{
       (\bv,\bw) \in\Omega \; :\; \left| \distance_d^2(\bv,\bw)-\frac{1}{6}\right|
       \ge \frac{1}{aT}
       \right\}
       \le  C_3a^2T^2\left(\frac 1d +\frac dN\right).
\end{equation}
Now, since
\begin{equation*}%\label{eqPT22}
       \left| \distance_d^2(\bv,\bw)-\frac{1}{6}\right| = 
 \left| \distance_d(\bv,\bw)-\frac{1}{\sqrt{6}}\right|
 \left( \distance_d(\bv,\bw)+\frac{1}{\sqrt{6}}\right)
    \ge \frac{1}{\sqrt{6}}\left| \distance_d(\bv,\bw)-\frac{1}{\sqrt{6}}\right|,
\end{equation*}
by sharpening the restriction in the definition of the set on the left side of~\eqref{eqPT2}, the set remains with fewer elements, so that
with $a=\sqrt{6}$, we derive that
\begin{equation}\label{eqPT3}
    \frac{1}{\#\Omega}\#\left\{
       (\bv,\bw) \in\Omega \; :\; \left| \distance_d(\bv,\bw)-\frac{1}{\sqrt{6}}\right|\ge \frac{1}{T}
       \right\}
       \le  6C_3T^2\left(\frac 1d +\frac dN\right).
\end{equation}
In particular, this proves Theorem~\ref{Theorem1}.

%%%%%%%%%%%%%%%%%%%%%%%%%%%%%%%%%%%%%%%%%%
\bigskip 
More generally, we consider the set $\Omega_K$ of $K$-polytopes 
$P=\{\bw_1,\dots,\bw_K\}\subset\cW$
the property that any of its two vertices are visible from each other.
Then Corollary~\ref{Corollary1} follows from the following 
more general statements.

%edges or diagonals far from the average expected value is small or, 
% equivalently in other words, almost all $K$-polytopes are round.

\begin{theorem}\label{TheoremK11}
There exists an effectively computable absolute constant $C_4>0$ such that for any integers
$d\ge 2$, $N\ge 3d$, $K\ge 2$ and any real $T>0$, we have 
\begin{equation*}\label{eqPTK1}
    \sdfrac{1}{\#\Omega_K}\cdot 
        \#\left\{\{\bw_1,\dots,\bw_K\}\subset \Omega_K \; :\;
        \max_{1\le m\not = n\le K}\left|\distance_d(\bw_m,\bw_n)-\sdfrac{1}{\sqrt{6}}\right|\ge \sdfrac{1}{T}
       \right\}
                    \le C_4T^2K^2\left(\frac {1}{d} +\frac{d}{N}\right).
%       \begin{cases}
%       \frac{C_6T^2K^2\log N}{N} & \text{ if $d=2$}, \\[8pt]
 %       C_6T^2K^2\left(\frac {1}{d} +\frac{d}{N}\right)& \text{ if $d\ge 3$}.
 %   \end{cases}
    \end{equation*}
\end{theorem}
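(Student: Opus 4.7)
The plan is to reduce this $K$-polytope inequality to the pairwise bound~\eqref{eqPT3} via a union bound over pairs of vertices combined with the full permutation symmetry of $\Omega_K$. Concretely, the event $\max_{m<n}|\distance_d(\bw_m,\bw_n) - 1/\sqrt{6}| \ge 1/T$ implies that some particular pair $(m,n)$ violates the bound, so by a union bound over the $\binom{K}{2}$ pairs and symmetry the left-hand side of the theorem is at most $\binom{K}{2}$ times the single-pair proportion
\[
\frac{1}{\#\Omega_K}\,\#\bigl\{P \in \Omega_K : |\distance_d(\bw_1,\bw_2) - 1/\sqrt{6}| \ge 1/T\bigr\}.
\]

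Next, since any $P \in \Omega_K$ has in particular $(\bw_1,\bw_2)\in\Omega$, dropping the other $\binom{K}{2}-1$ visibility constraints yields $\Omega_K \subseteq \Omega \times \cW^{K-2}$ as ordered tuples. Thus the numerator in the displayed fraction is at most $(N+1)^{d(K-2)}$ times the count of bad pairs in $\Omega$, which by~\eqref{eqPT3} is at most $6C_3 T^2 (1/d + d/N)(N+1)^{d(K-2)} \#\Omega$. This reduces the task to bounding the ratio $R_K := (N+1)^{d(K-2)}\#\Omega/\#\Omega_K$ by an absolute constant. For this I would invoke Theorem~\ref{TheoremSelfVisible} applied to both $\#\Omega = \#\Omega_2$ and $\#\Omega_K$: the main terms give $R_K \sim \Lambda_{d,2}/\Lambda_{d,K} = \prod_p\prod_{k=2}^{K-1}(1-k/p^d)^{-1}$, and I would estimate this Euler product by splitting at $p^d = 2K$, controlling the large-prime tail via $-\log(1-x)\le 2x$ and handling the finitely many small primes (present only when $K > 2^{d-1}$) directly using the fact that $K\le 2^d$ keeps every factor strictly positive.

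The principal obstacle is the uniform control of this ratio: as $K$ grows toward $2^d$ the factor $\Lambda_{d,2}/\Lambda_{d,K}$ may grow, and a careful analysis is needed to confirm that combining $\binom{K}{2}$ with $R_K$ yields the claimed $K^2$ dependence with a truly absolute $C_4$. A secondary technicality is verifying that the additive error terms in Theorem~\ref{TheoremSelfVisible} are dominated by the main term $(N+1)^{dK}\Lambda_{d,K}$ under the hypotheses $d\ge 2$ and $N\ge 3d$, which may restrict the effective range of $K$; note, however, that for $K > 2^d$ the set $\Omega_K$ is empty by a pigeonhole argument modulo $2$ (any two vertices congruent mod $2$ fail visibility), so the inequality is trivially valid in that range and we only need the bound for $2 \le K \le 2^d$.
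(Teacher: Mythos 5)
Your outline---a union bound over the $\binom{K}{2}$ pairs, symmetry, the containment $\Omega_K\subseteq\Omega\times\cW^{K-2}$, and then the pairwise bound~\eqref{eqPT3}---is a sensible reduction (the paper itself states Theorem~\ref{TheoremK11} without writing out a proof, so there is no official argument to match it against). The reduction correctly isolates the one genuinely nontrivial ingredient: a lower bound for $\#\Omega_K$ of the form $\#\Omega_K\ge c\,(N+1)^{dK}$ with $c$ absolute, equivalently the boundedness of your ratio $R_K=(N+1)^{d(K-2)}\#\Omega/\#\Omega_K$. The gap is that the tool you propose for this, Theorem~\ref{TheoremSelfVisible}, cannot deliver it in the stated parameter range. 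That theorem reads $\#\Omega_K/\#\cW^K=\Lambda_{d,K}+O(dK/N^{1/2})+O(2^dK^2/\log^{d-1}N)$, and under the hypothesis $N\ge 3d$ the second error term is not small compared with $\Lambda_{d,K}\le 1$: already for $d=2$, $K=2$ it is of size $16/\log N$, which exceeds $\Lambda_{2,2}=6/\pi^2$ for every $N$ up to astronomically large values; in general one needs $\log N\gg (2^dK^2)^{1/(d-1)}$, i.e.\ $N$ super-exponentially large in $d$, before the main term dominates. So Theorem~\ref{TheoremSelfVisible} gives no nontrivial lower bound on $\#\Omega_K$ for typical $N\ge 3d$, and the step ``$R_K\sim\Lambda_{d,2}/\Lambda_{d,K}$'' is unjustified. (The separate issue you flag, that $\Lambda_{d,2}/\Lambda_{d,K}$ itself blows up as $K\to 2^d$, is real but curable: since $\distance_d\in[0,1]$ and $1/\sqrt6<1$, the counted set is empty unless $T>1$, and the right-hand side exceeds $1$ unless $C_4T^2K^2/d<1$, so one may assume $K\le\sqrt{d/C_4}$, far below $2^d$.)

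To close the gap you must produce the lower bound on $\#\Omega_K$ by other means, valid for all $N\ge 3d$ after restricting to the nonvacuous range $K^2\le d/(C_4T^2)$, $T>1$. A union bound $\#\cW^K-\#\Omega_K\le\binom{K}{2}\big(\#\cW^2-\#\Omega\big)\#\cW^{K-2}$ together with Lemma~\ref{LemmaEstimateOmega} gives $1-\#\Omega_K/\#\cW^K\le\binom{K}{2}\big(1-\zeta(d)^{-1}+O(d/N)\big)$; with only $N\ge 3d$ the $O(d/N)$ term is of constant order, so even this requires care (e.g.\ treating small $d$ by enlarging $C_4$ so the claimed inequality is trivial there, and exploiting $1-\zeta(d)^{-1}=O(2^{-d})$ together with $K\ll\sqrt d$ for large $d$). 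Alternatively, one can avoid lower-bounding $\#\Omega_K$ altogether by running the second-moment argument of Sections~\ref{SectionAv}--\ref{SectionM2} with the average taken directly over $\Omega_K$. Either way, the decisive step is currently missing from your argument, and the specific route you name for it does not work.
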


\begin{corollary}\label{CorollaryK11}
Let $\eta\in (0,1/4)$ be fixed.
Then, there exists an effectively computable absolute constant $C_5>0$ such that
for any integers $d\ge 2$, $N\ge d^2$, $2 \le K\le d^{1/2-2\eta}$, we have 
\begin{equation*}
   \sdfrac{1}{\#\Omega_K}\cdot \#\left\{\{\bw_1,\dots,\bw_K\}\subset \Omega_K \; :\;
    \left.\begin{aligned}
        & \distance_d(\bw_m,\bw_n)\in \left[\sdfrac{1}{\sqrt{6}}-\sdfrac{1}{d^\eta},
                                    \sdfrac{1}{\sqrt{6}}+\sdfrac{1}{d^\eta}    \right]\\
        & \text{ for all $1\le m\not = n\le K$ }
    \end{aligned}
    \right.
\right\}
\ge 1- \sdfrac{C_5}{d^{2\eta}}.
\end{equation*}
\end{corollary}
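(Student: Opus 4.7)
\medskip
\noindent\textbf{Proof proposal for Corollary~\ref{CorollaryK11}.}
The plan is to deduce the corollary directly from Theorem~\ref{TheoremK11} by specializing the free parameter $T$ and by using the hypotheses on $N$ and $K$ to bound the right-hand side. The set whose proportion we wish to lower bound is exactly the complement, inside $\Omega_K$, of the exceptional set appearing in Theorem~\ref{TheoremK11}, once we recognize that the condition ``$\distance_d(\bw_m,\bw_n)\in[1/\sqrt{6}-1/d^\eta,\,1/\sqrt{6}+1/d^\eta]$ for all $m\neq n$'' is equivalent to $\max_{m\neq n}|\distance_d(\bw_m,\bw_n)-1/\sqrt{6}|<1/d^\eta$.

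First I would set $T:=d^\eta$, so that $1/T=1/d^\eta$. Applying Theorem~\ref{TheoremK11} with this choice and with the hypothesis $N\ge d^2$ gives $1/d+d/N\le 2/d$. Using moreover the hypothesis $K\le d^{1/2-2\eta}$, so that $K^2\le d^{1-4\eta}$, the right-hand side of Theorem~\ref{TheoremK11} is estimated by
\begin{equation*}
   C_4T^2K^2\left(\frac{1}{d}+\frac{d}{N}\right)
   \le C_4\cdot d^{2\eta}\cdot d^{1-4\eta}\cdot \frac{2}{d}
   = 2C_4\,d^{-2\eta}.
\end{equation*}

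Passing to the complement, the density of those $K$-polytopes in $\Omega_K$ for which \emph{all} pairwise normalized distances lie in $[1/\sqrt{6}-1/d^\eta,\,1/\sqrt{6}+1/d^\eta]$ is at least $1-2C_4/d^{2\eta}$, and setting $C_5:=2C_4$ yields the stated conclusion. The constraint $\eta<1/4$ is what makes the exponent $1/2-2\eta$ in the hypothesis on $K$ nontrivial and ensures $K^2$ can be absorbed by the gain $1/d$ from the second moment estimate.

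There is no real obstacle here beyond parameter bookkeeping: the substantive work has already been done in assembling Lemmas~\ref{LemmaEstimateOmega}, \ref{LemmaEstimateAvis}, \ref{LemmaEstimateM2vis} and in the Chebyshev-type passage that yields Theorem~\ref{TheoremK11}. The only minor subtlety is to balance the three factors $T^2$, $K^2$ and $(1/d+d/N)$ so that the product is $o(1)$ as $d\to\infty$; the condition $N\ge d^2$ ensures $d/N\le 1/d$ (removing any loss from the second error term), and the choice $T=d^\eta$ together with $K\le d^{1/2-2\eta}$ is precisely the ``budget'' that leaves a polynomial saving of size $d^{-2\eta}$. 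Finally, since $C_4$ is effectively computable by Theorem~\ref{TheoremK11}, so is $C_5$.
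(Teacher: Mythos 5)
Your proof is correct and follows exactly the route the paper intends: the paper states Corollary~\ref{CorollaryK11} as an immediate consequence of Theorem~\ref{TheoremK11}, and your choice $T=d^{\eta}$ together with the bounds $K^2\le d^{1-4\eta}$ and $1/d+d/N\le 2/d$ (from $N\ge d^2$) gives the stated $1-2C_4/d^{2\eta}$ with $C_5=2C_4$. The only remark worth adding is that the open/closed interval distinction is harmless since the complement of the exceptional set in Theorem~\ref{TheoremK11} is contained in the good set of the corollary, which is the direction needed for the lower bound.
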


For the proof of Theorem~\ref{Theorem2} one can follow the path from Sections~\ref{SectionAv} 
and~\ref{SectionM2} with one component $\bv=\mathbf{0}$ fixed in the involved summations.
One finds that almost all normalized distances between the origin and the components of points in $\Omega$ are close to $1/\sqrt{3}$. 

On the other hand, we know that, according 
to Theorem~\ref{Theorem1}, almost all normalized distances between $\bv$ and $\bw$ 
with $(\bv,\bw)\in\Omega$ are almost always almost equal to $1/\sqrt{6}$.
Therefore, almost all triangles with vertices $\mathbf{0}, \bv, \bw$ with $(\bv,\bw)\in\Omega$
are almost isosceles having the normalized edges almost equal to $1/\sqrt{3},1/\sqrt{3},1/\sqrt{6}$ 
and Theorem~\ref{Theorem2} follows immediately.

%%%%%%%%%%%%%%%%%%%%%%%%%%%%%%%%%%%%%%%%%%%%%%%%%%%%%%%%%%%%%%%%%%%%%%%%%%%%%%%%%%%%%%%%%%%%%%%%%%%%%%%%
\section{The probability that a \texorpdfstring{$K$}{K}-polytope is Self-Visible}\label{SectionProbabilityOK}
% has all edges in \texorpdfstring{$\Omega$}{Omega}}\label{SectionProbabilityOK}

Let $K\ge 2$ be a fixed integer. The set of self-visible $K$-polytopes with
vertices in the lattice $\cW$ is
\begin{equation}\label{eqOmegaKdef}
%    \begin{split}
    \Omega_K=\left\{
    P\in \cW^K \; :\; \bv',\bv'' \text{ visible from each other, for all }
    \bv',\bv''\in P
    \right\}.
%    \end{split}
\end{equation}

Our object here is to see if there is a tendency of 
the probabilities that a $K$-polytope 
is self-visible as $N$ gets large. We show that if $d$ and $K$ are kept fixed,
the limit of the ratios
\begin{equation*}
   Prob(d,N,K)=\lim_{N\to\infty}\frac{\#\Omega_K}{\#\cW^K}
\end{equation*}
does exist.
% $Prob(d,N,K)=\#\Omega_K/\#\cW^K$, as $N$ tends to infinity.

If $K=2$, then $\Omega_K$ coincides with $\Omega$, but the M\"obius 
summation method used in the proof of Lemma~\ref{LemmaEstimateOmega} 
to estimate $\#\Omega_2$ is not suitable for larger $K$, because of 
the size of the multitude of new terms introduced.
We need to have a better control on the large divisors, 
so we will proceed accordingly.

Denote a generic polytope by $P=\{\bv_1,\dots,\bv_K\}$ 
and the coordinates of its vertices by
$\bv_j=(v_{j,1},\dots,v_{j,d})$ for $1\le j\le K$.
Note that, for each positive integer $m$, we have the following inequality
\begin{equation}\label{eqOmegaBoundjk}
%    \begin{split}
    \#\left\{(\bv_j,\bv_k)\in\cW^2 \; :\; 
    \bv_j\neq \bv_k,\ m\mid \gcd(v_{j,1}-v_{k,1},\dots,v_{j,d}-v_{k,d})
    \right\}\le \frac{\#\cW^2}{m^d},
%    \end{split}
\end{equation}
because, say,  $v_{j,1},\dots,v_{j,d}$ 
are free and then each of $v_{k,1},\dots,v_{k,d}$
belongs to the corresponding shifted arithmetic progression of ratio $m$.
Also, if $m\ge N$, the left side of~\eqref{eqOmegaBoundjk} 
equals zero, since there are no pairs to count.

Fix $M>0$, a parameter to be chosen later, and sum the inequalities~\eqref{eqOmegaBoundjk}
for all~\mbox{$m>M$}. Then the size of the resulted sum is 
\begin{equation*}
   \le  \sum_{m> M}\frac{\#\cW^2}{m^d}=O\left(\frac{\#\cW^2}{M^{d-1}}\right).
\end{equation*}
As a consequence, any such subsum is also $\ll \#\cW^2/M^{d-1}$.
In particular, the sum over all positive integers $m$ that have 
at least one prime factor larger than $M$. 
This holds for each pair $(\bv_j,\bv_k)$, and there are $K(K-1)/2$ 
such pairs with $1\le j<k\le K$.
As a consequence, it follows that
\begin{equation}\label{eqOmegaBoundq}
%    \begin{split}
    \#\left\{(\bv_1,\dots,\bv_K)\in \cW^K\; :\; 
    \left.\begin{aligned}
        & q\mid \gcd(v_{j,1}-v_{k,1},\dots,v_{j,d}-v_{k,d})\\
        & \text{ for some prime $q>M$,}\\
        & \text{ for some $1\le j<k\le K$ }
    \end{aligned}
    \right.
        \right\}
    \ll \frac{K^2\#\cW^K}{M^{d-1}}.
%     \end{split}
\end{equation}
In other words, with the exception of at most $O\left(\frac{K^2\#\cW^K}{M^{d-1}}\right)$
$K$-tuples $(\bv_1,\dots,\bv_K)$,
for all the other polytopes $P=(\bv_1,\dots,\bv_K)\in\cW^K$, the condition
$P\in\Omega_K$ is equivalent to the condition that $P\in\Omega_{K}(M)$, where
\begin{equation}\label{eqOmegaKM}
%    \begin{split}
    \Omega_K(M):=
    \left\{(\bv_1,\dots,\bv_K)\in \cW^K\; :\; 
    \left.\begin{aligned}
        &  \gcd(B,v_{j,1}-v_{k,1},\dots,v_{j,d}-v_{k,d})=1\\
        & \text{ for all $1\le j<k\le K$ }
    \end{aligned}
    \right.
        \right\},
%     \end{split}
\end{equation}
where $B$ is the primorial number
\begin{equation*}
   B:=\prod\limits_{\substack{\text{$p$ prime}\\ p\le M}}p.
\end{equation*}
% $B:=\prod\limits_{\substack{\text{$p$ prime}\\ p\le M}}p$.
Therefore, the probability that a polytope $P\in\cW^K$ 
has all vertices visible from each other is
\begin{equation}\label{eqOmegaOmegaM}
   \frac{\#\Omega_K}{\#\cW^K}=\frac{\#\Omega_K(M)}{\#\cW^K}
   + O\left(\frac{K^2\#\cW^K}{M^{d-1}}\right).
\end{equation}
By the Prime Number Theorem, we know that $B=e^{(1+o(1))M}$, so that we will eventually choose $M$ of 
size $\log N$ to assure that $B<N$.

Next, we split the interval $[0,N]$ in subintervals of size $B$.
Accordingly, the cube $[0,N]^d$ is split in boxes of side length $B$.
The number of these boxes~is
\begin{equation}\label{eqNumberOfBoxes}
    \text{The number of boxes} = \left(\frac{N}{B}+O(1)\right)^d
    =\frac{N^d}{B^d}+O\left(\frac{dN^{d-1}}{B^{d-1}}\right).
\end{equation}
Observe, by the definition, that $\Omega_K(M)$ has the same number of elements in each such box.
Denote this number by $H(B)$, that is,
\begin{equation}\label{eqHB}
%    \begin{split}
    H(B):=\#
    \left\{(\bv_1,\dots,\bv_K)\in \Omega_K(M)\; :\; 
          0\le v_{j,l}<B \text{ for all $1\le j\le K$, $1\le l\le d$}
        \right\}.
%     \end{split}
\end{equation}
Then, by~\eqref{eqNumberOfBoxes} and~\eqref{eqHB}, as each $\bv_1,\dots,\bv_K$ runs over each box,
it follows that
\begin{equation}\label{eqSizeOfOmegaKM}
    \#\Omega_K(M)=H(B)\left(\frac{N^d}{B^d}
    +O\left(\frac{dN^{d-1}}{B^{d-1}}\right)\right)^K
    =\frac{N^{dK}H(B)}{B^{dK}}
    \left(1+O\left(\frac{dKB}{N}\right)\right).
\end{equation}
Since $H(B)\le B^{dK}$ and since $\#\cW^K=(N+1)^{dK}=N^{dK}(1+O(dK/N))$, 
it follows that
\begin{equation}\label{eqProbability102}
   \frac{\#\Omega_K(M)}{\#\cW^{K}} = \frac{H(B)}{B^{dK}}\left(1+O\left(\frac{dKB}{N}\right)\right)
   =  \frac{H(B)}{B^{dK}}+O\left(\frac{dKB}{N}\right).
\end{equation}
On combining~\eqref{eqSizeOfOmegaKM} and~\eqref{eqOmegaOmegaM}, it yields 
\begin{equation}\label{eqeqProbability103}
   \frac{\#\Omega_K}{\#\cW^{K}} = \frac{H(B)}{B^{dK}}+O\left(\frac{dKB}{N}\right)
   + O\left(\frac{K^2}{M^{d-1}}\right).
\end{equation}

Now, for each prime $p\mid B$, consider the analogue of 
the set $\Omega_K(M)$ defined 
by~\eqref{eqOmegaKM}. Its cardinality is analogous to $H(B)$ and is given by
\begin{equation}\label{eqHdep}
%    \begin{split}
    H(p):=
    \#\left\{(\bv_1,\dots,\bv_K)\in \cW^K :
    \left.\begin{aligned}
        &  \gcd(p,v_{j,1}-v_{k,1},\dots,v_{j,d}-v_{k,d})=1\\
        & \text{ for all $1\le j<k\le K$, }\\
        &  0\le v_{j,l} \le p-1  \text{ for all $1\le j\le K$, $1\le l\le d$}
    \end{aligned}
    \right.
        \right\}.
%     \end{split}
\end{equation}
Note that each $(\bv_1,\dots,\bv_K)\in \cW^K$ that contributes to $H(B)$ produces,
via reduction modulo $p$, a $K$-tuple that contributes to $H(p)$, 
and this holds for each prime divisor $p$ of $B$.
Conversely, by the Chinese Remainder Theorem, each collection of 
$K$-tuples, with one $K$-tuple for
each prime divisor of $B$, produces a unique $K$-tuple that is counted in $H(B)$.
In conclusion, 
\begin{equation*}
   H(B)=\prod_{\substack{\text{$p$ prime}\\ p\mid B}} H(p)
   = \prod_{\substack{\text{$p$ prime}\\ p\le M}} H(p),
\end{equation*}
which combined with~\eqref{eqeqProbability103} implies
\begin{equation}\label{eqProbability104}
    \frac{\#\Omega_K}{\#\cW^{K}} = \prod_{\substack{\text{$p$ prime}\\ p\le M}} \frac{H(p)}{p^{dK}}
    +O\left(\frac{dKB}{N}\right)
   + O\left(\frac{K^2}{M^{d-1}}\right).
\end{equation}

Next, let us observe that since each of the coordinates 
$v_{j,1},\dots,v_{j,d}$ and $v_{k,1},\dots,v_{k,d}$
belongs to $\{0,1,\dots,p-1\}$, the difference
$v_{j,1}-v_{k,1}$ cannot be divisible by $p$ unless $v_{j,1}=v_{k,1}$, 
and similarly for all differences $v_{j,2}-v_{k,2}, \dots, v_{j,d}-v_{k,d}$.
As a consequence, the condition
$\gcd(p,v_{j,1}-v_{k,1},\dots,v_{j,d}-v_{k,d})=1$ from the definition of $H(p)$ given 
by~\eqref{eqHdep} is equivalent to the condition that the $d$-tuples
$(v_{j,1},\dots,v_{j,d})$ and $(v_{k,1},\dots,v_{k,d})$ are distinct.
In other words
\begin{equation*}%\label{eqHdep}
%    \begin{split}
    H(p)=
    \#\left\{(\bv_1,\dots,\bv_K)\in \cW^K\; :\; 
    \left.\begin{aligned}
        &  \mbox{}\bv_{j}\neq\bv_k \text{ for $1\le j\neq k\le K$,}\\
        &  0\le v_{j,l} \le p-1  \text{ for all $1\le j\le K$ and $1\le l\le d$ }
    \end{aligned}
    \right.
        \right\}.
%     \end{split}
\end{equation*}
Here, there are exactly $p^d$ choices for $\bv_1$.
Then, for each fixed $\bv_1$, the only restriction on~$\bv_2$ 
is to not coincide with $\bv_1$, so that there are $p^d-1$ choices for $\bv_2$.
With $\bv_1$ and $\bv_2$ fixed, the only restrictions 
on $\bv_3$ are $\bv_3\neq\bv_1$ and 
$\bv_3\neq \bv_2$, so that there are $p^{d}-2$ choices for $\bv_3$.
And so on, up to $\bv_K$, for which there are $p^d-(K-1)$ choices.
In conclusion 
\begin{equation*}
   H(p)= p^d\left(p^d-1\right)\cdots \left(p^d-(K-1)\right).
\end{equation*}
On combining this with~\eqref{eqProbability104} we see that
\begin{equation}\label{eqProbability105}
    \frac{\#\Omega_K}{\#\cW^{K}} = \prod_{\substack{\text{$p$ prime}\\ p\le M}}
    \left(1-\frac{1}{p^d}\right)\cdots
    \left(1-\frac{K-1}{p^d}\right)
    +O\left(\frac{dKB}{N}\right)
   + O\left(\frac{K^2}{M^{d-1}}\right).
\end{equation}
The finite product over primes in~\eqref{eqProbability105} 
can be replaced with the completed product over all primes, 
with a change in the error term that is swallowed inside 
the last error term. Indeed, if we denote
\begin{equation*}
  \Lambda_{d,K}(M) :=\prod_{\substack{\text{$p$ prime}\\ p> M}}
  \left(1-\frac{1}{p^d}\right)\cdots
    \left(1-\frac{K-1}{p^d}\right),
\end{equation*}
an infinite product that converges if $d\ge 2$, then
\begin{equation*}
  \log \Lambda_{d,K}(M) =\sum_{\substack{\text{$p$ prime}\\ p> M}}
  \sum_{1\le k\le K-1}\log \left(1-\frac{k}{p^d}\right)
  =\sum_{\substack{\text{$p$ prime}\\ p> M}}
  \sum_{1\le k\le K-1}O\left(\frac{k}{p^d}\right).
\end{equation*}
This implies
\begin{equation*}
    \begin{split}
  |\log \Lambda_{d,K}(M)| &=O\bigg(\sum_{\substack{\text{$p$ prime}\\ p> M}}
  \sum_{1\le k\le K-1}\frac{k}{p^d}
  \bigg)
  = O\bigg(\sum_{\substack{\text{$p$ prime}\\ p> M}}
  \frac{K^2}{p^d} \bigg)
  =O\bigg(\sum_{m> M}
  \frac{K^2}{m^d} \bigg)
  =O\left(  \frac{K^2}{M^{d-1}} \right).
  \end{split}
\end{equation*}  
It follows that
\begin{equation*}
    \Lambda_{d,K}(M) = \exp\left(O\left(\sdfrac{K^2}{M^{d-1}}\right)\right)
     = 1+O\left(\sdfrac{K^2}{M^{d-1}}\right).
\end{equation*}
Therefore, if we denote by $\Lambda_{d,K}$ the complete infinite product, 
\begin{equation}\label{eqCompleteProduct}
    \Lambda_{d,K}:= \prod_{\text{$p$ prime}}
    \left(1-\sdfrac{1}{p^d}\right)\cdots
    \left(1-\sdfrac{K-1}{p^d}\right),
\end{equation}    
which is constant for any fixed $d$ and $K$, we have
\begin{equation}\label{eqEstimateProduct}
    \begin{split}
  \prod_{\substack{\text{$p$ prime}\\ p\le M}}
    \left(1-\sdfrac{1}{p^d}\right)\cdots
    \left(1-\sdfrac{K-1}{p^d}\right) 
    &=  \frac{\Lambda_{d,K}}{\Lambda_{d,K}(M)}
    = \Lambda_{d,K}
    + O\left(\sdfrac{K^2}{M^{d-1}}\right),
    \end{split}
\end{equation}
where the implied constant in the big $O$ estimate is absolute,
because $\Lambda_{d,2}=\zeta(d)^{-1}$ for $d\ge 2$ and,
for any fixed $d$, the sequence $\{\Lambda_{d,K}\}_{K\ge 2}$ is
decreasing.

Then, inserting~\eqref{eqEstimateProduct} and~\eqref{eqCompleteProduct} in~\eqref{eqProbability105}, we arrive at the following result
\begin{equation*}%\label{eqProbability106}
    \frac{\#\Omega_K}{\#\cW^{K}} = \prod_{\substack{\text{$p$ prime}}}
    \left(1-\frac{1}{p^d}\right)\cdots
    \left(1-\frac{K-1}{p^d}\right)
    +O\left(\frac{dKB}{N}\right)
   + O\left(\frac{K^2}{M^{d-1}}\right).
\end{equation*}

We now take $B$ to be the largest primorial that is~$\le \sqrt{N}$, 
which means that \mbox{$M\sim (\log N)/2$}.
% Taking $B=\sqrt{N}$, say, we get an $M$ that satisfies $M\sim \frac{1}{2}\log N$.
Then,  
\begin{equation}\label{eqProbability107}
    \frac{\#\Omega_K}{\#\cW^{K}} = \prod_{\substack{\text{$p$ prime}}}
    \left(1-\frac{1}{p^d}\right)\cdots
    \left(1-\frac{K-1}{p^d}\right)
    +O\left(\frac{dK}{\sqrt{N}}\right)
   + O\left(\frac{2^dK^2}{\log^{d-1}N}\right),
\end{equation}
and the constants implied in the big $O$ terms are absolute.
This concludes the proof of Theorem~\ref{TheoremSelfVisible}.

%%%%%%%%%%%%%%%%%%%%%%%%%%%%%%%%%%%%%%%%%%%%%%%%%%%%%%
\section{Probabilistic Intuition}\label{SectionIntuition}
In this section, we show how to interpret the constant $1/\sqrt{6}$ in
Theorem~\ref{Theorem1} via probabilistic intuition. Similar arguments can
give intuition for some of the other particular constants we obtain in this
paper. We recall

\begin{equation*}
\distance_d(\bv,\bw)=\frac{1}{Nd^{1/2}}
    \bigg(\sum_{n=1}^d (w_n-v_n)^2\bigg)^{1/2} = \left( \frac{1}{d} 
    \sum_{n=1}^d \left(\frac{w_n}{N}-\frac{v_n}{N}\right)^2 \right)^{1/2},
\end{equation*}    
As $N \rightarrow \infty$, if we select $\bv$ uniformly at random from
$\mathcal W$ (or, indeed, the subset of integer vectors visible from the
origin in~$\cW$), the normalized vector $\bv/N$ becomes equidistributed 
in the hypercube $[0, 1]^d$. That is, 
\begin{equation*}
    \sum_{\bv \in \mathcal W} \delta_{\bv/N} \rightarrow m,
\end{equation*}
where $m$ is the standard Lebesgue measure on $[0, 1]^d$, 
and the convergence is
in the weak*-sense as $N \rightarrow \infty$. 
The same result is true with $\mathcal W$ replaced by the subset of primitive
vectors in $\mathcal W$. 
Thus, to try and get intuition about the $d \rightarrow \infty$ 
behavior of our normalized distance~$\distance_d(\bv,\bw)$, 
we can consider the following probabilistic analogue. 
Let $\mathbf{X} = (X_1, \ldots X_d), \mathbf{Y} = (Y_1, \ldots, Y_d)$ 
be independent random vectors chosen according to 
Lebesgue measure on the hypercube $[0, 1]^d$. 
Thus $X_1, \ldots, X_d$ and $Y_1, \ldots Y_d$ are independent, 
identically distributed (i.i.d.) uniform $[0,1]$ random variables, 
and also indpendent from each other. 
We define 
\begin{equation*}
    \distance_d(\mathbf X,\mathbf Y) 
    = \left( \frac{1}{d} \sum_{n=1}^d \left(X_n - Y_n\right)^2 \right)^{1/2}.
\end{equation*}
Thus 
\begin{equation*}
    \distance_d^2(\mathbf X,\mathbf Y) 
    =  \frac{1}{d} \sum_{n=1}^d \left(X_n - Y_n\right)^2
\end{equation*}
is the sample mean of $d$ independent random variables of the form 
$(U-V)^2$, where $U$ and~$V$ are independent uniform $[0, 1]$ 
random variables. 
By the strong law of large numbers, as~$d \rightarrow \infty$, 
this converges with probability $1$ to the mean 
\begin{equation*}
    E( (U-V)^2) =  \int_0^1 \int_0^1 (u-v)^2 du dv = 1/6.
\end{equation*}
That is, as $d \rightarrow \infty,$ with probability $1$,
\begin{equation*}
   \distance_d^2(\mathbf X,\mathbf Y) \rightarrow 1/6,
\end{equation*}
so 
\begin{equation*}
   \distance_d (\mathbf X,\mathbf Y) \rightarrow 1/\sqrt{6}.
\end{equation*}
To be clear, this does not give a direct proof of 
Theorem~\ref{Theorem1}, since there are tricky issues with 
the interchange of limits. 
Similar arguments can yield intuition for the other constants 
in our results.

%%%%%%%%%%%%%%%%%%%%%%%%%%%%%%%%%%%%%%%%%
\subsection*{Acknowledgement}
We thank Sara Billey, Sam Fairchild, Alex Kontorovich, and Doug West for valuable discussions at a variety of times about the problem of the limiting density of  $\Omega_K$. J.S.A. was partially supported by NSF grant DMS 2003528, `Curves, Counting, and Correlations'. J.S.A. also acknowledges the hospitality of the Mathematical Sciences Research Institute (MSRI) during the Spring 2022 program on Analysis and Geometry of Random Spaces.

%%%%%%%%%%%%%%%%%%%%%%%%%%%%%%%%%%%%%%%%%

% \bigskip\bigskip\bigskip


\begin{thebibliography}{99}

\bibitem{AHK2001}
 Charu C. Aggarwal, Alexander Hinneburg, Daniel A. Keim, 
\textit{On the Surprising Behavior of Distance Metric in High-Dimensional Space},
% Zbl 1047.68038
Van den Bussche, Jan (ed.) et al., Database theory - ICDT 2001. 8th international conference, London, GB, January 4–6, 2001. Proceedings. Berlin: Springer (ISBN 3-540-41456-8). Lect. Notes Comput. Sci. 1973, 420--434 (2001).
%% %  (2001) - Contribution to a conference collection Published in: 
%% Database Theory — ICDT 2001 / Van den Bussche, Jan; Vianu, Victor (ed.). 
%% - Berlin, Heidelberg : Springer Berlin Heidelberg, 2001. 
%% (Lecture Notes in Computer Science ; 1973).  
%% pp. 420--434. - ISBN 978-3-540-41456-8
% \href{https://link.springer.com/chapter/10.1007/3-540-44503-X_27}{https://link.springer.com/chapter/10.1007/3-540-44503-X\_27}
% Or here:
% \href{http://kops.uni-konstanz.de/bitstream/handle/123456789/5715/On_the_Surprising_Behavior_of_Distance_Metric_in_High_Dimensional_Space.pdf}{http://kops.uni-konstanz.de/bitstream/handle/123456789/5715/On\_the\_Surprising\_Behavior\_
% of\_Distance\_Metric\_in\_High\_Dimensional\_Space.pdf} 
% or here \href{https://bib.dbvis.de/uploadedFiles/155.pdf}{https://bib.dbvis.de/uploadedFiles/155.pdf}

% \sloppy
% \url{http://kops.uni-konstanz.de/bitstream/handle/123456789/5715/On_the_Surprising_Behavior\UrlBreaks_of_Distance_Metric_in_High_Dimensional_Space.pdf}

\bibitem{BBC2007}
D. H. Baileya, J. M. Borwein, R. E. Crandall,
\textit{Box integrals},
J. Comput. Appl. Math.
\textbf{206} (2007), no. 1, 196--208.
%%% Zbl 1120.65027
%% Journal of Computational and Applied Mathematics,
%% \textbf{206}, Issue 1, 1 September 2007, Pages 196--208.
%% Bailey, D. H.; Borwein, J. M.; Crandall, R. E.
%% Box integrals. (English) Zbl 1120.65027
%% J. Comput. Appl. Math. 206, No. 1, 196-208 (2007).
%% MSC:  65D32 41A55 41A63
% \href{https://www.sciencedirect.com/science/article/pii/S0377042706004250/pdfft}{https://www.sciencedirect.com/science/article/pii/S0377042706004250/pdfft}

\bibitem{Bas2021}
Uwe Bäsel,
\textit{The moments of the distance between two random points in a regular polygon},
preprint January 11, 2021.
\href{https://arxiv.org/pdf/2101.03815.pdf}{https://arxiv.org/pdf/2101.03815.pdf}

\bibitem{BS2021}
Sebastien Bubeck, Mark Sellke,
\textit{A universal law of robustness via isoperimetry},
Advances in Neural Information Processing Systems 34 pre-proceedings 
(\href{https://proceedings.neurips.cc/paper/2021}{NeurIPS 2021}), 
\href{https://arxiv.org/pdf/2105.12806.pdf}{https://arxiv.org/pdf/2105.12806.pdf}.
% https://www.quantamagazine.org/computer-scientists-prove-why-bigger-neural-networks-do-better-20220210/

\bibitem{BP2009}
B. Burgstaller, F. Pillichshammer, 
\textit{The average distance between two points}, 
Bull. Aust. Math. Soc.,
% Bulletin of the Australian Mathematical Society, 
\textbf{80} (2009), no. 3 , 353--359. 
% Zbl 1183.52006
% \href{https://www.cambridge.org/core/services/aop-cambridge-core/content/view/F182A617B5EC6DB5AD31042A4BDF83AE/S0004972709000707a.pdf}{https://www.cambridge.org/core/services/aop-cambridge-core/content/view/F182A617B5EC6DB5AD
% 31042A4BDF83AE/S0004972709000707a.pdf}

\bibitem{CGZ2001}
C. I. Cobeli,  S. M. Gonek, A. Zaharescu, 
\textit{The distribution of patterns of inverses modulo a prime}
J. Number Theory \textbf{101} (2003), no. 2, 209--222 .
% Zbl 1086.11045



%MR1845688
\bibitem{CZ2001a}
Cristian Cobeli, Alexandru Zaharescu,
\textit{On the distribution of the $\FF_p$-points on an affine curve in $r$ dimensions}, 
Acta Arith. \textbf{99} (2001), no. 4, 321--329. 
%11G20 (11T99)

\bibitem{CZ2001b}
Cristian Cobeli, Alexandru Zaharescu,
\textit{Generalization of a problem of Lehmer},
Manuscripta Math. \textbf{104} (2001), no. 3, 301--307. 


\bibitem{Dun1997}
Steven R. Dunbar,
\textit{The average distance between points in geometric figures},
Coll. Math. J.,
The College Mathematics Journal, 
\textbf{28} (1997), no. 3, 187--197. 
%  Zbl 0995.52500
% \href{https://www.tandfonline.com/doi/abs/10.1080/07468342.1997.11973860}{https://www.tandfonline.com/doi/abs/10.1080/07468342.1997.11973860}


\bibitem{FT1932}
Willy Feller, Erhard Tornier, 
\textit{Mengentheoretische Untersuchung von Eigenschaften der Zahlenreihe, Mathematische Annalen}, 
\textbf{107} (1932), 188--232.
% Zbl 0005.19902

\bibitem{GIW2022}
A. Gafni, A. Iosevich, E. Wyman,
\textit{Uniform distribution and geometric incidence theory},
preprint February 10, 2022. 
\href{https://arxiv.org/pdf/2202.05359.pdf}{https://arxiv.org/pdf/2202.05359.pdf}

%MR2076335 Reviewed 
\bibitem{Guy2004}
Richard K. Guy, 
\textit{Unsolved problems in number theory}. Third edition. 
Problem Books in Mathematics. Springer-Verlag, New York, 2004. xviii+437 pp. 
%ISBN: 0-387-20860-7 (Reviewer: Greg Martin) 11-01 (11-02)

\bibitem{Ios2019}
A. Iosevich, 
\textit{On the approximate unit distance problem}, 
Aldroubi, Akram (ed.) et al., New trends in applied harmonic analysis. Volume 2. 
Harmonic analysis, geometric measure theory, and applications. 
Collected papers based on courses given at the 2017 CIMPA school, 
Buenos Aires, Argentina, July 31 - August 11, 2017. 
Cham: Birkhäuser. Appl. Numer. Harmon. Anal., pages 121--128 (2019).
%Zbl 1443.52013

\bibitem{IMT2012}
A. Iosevich, M. Mourgoglou, K. Taylor, 
\textit{On the Mattila-Sjölin theorem for distance sets}, 
Ann. Acad. Sci. Fenn., Math. \textbf{37} (2012), no. 2, 557--562.
%Zbl 1279.28006

\bibitem{IRU2014}
A. Iosevich, M. Rudnev, I. Uriarte-Tuero, 
\textit{Theory of dimension for large discrete sets and applications},
Math. Model. Nat. Phenom. \textbf{9} (2014), no. 5, 148--169. 
% Zbl 1310.52020

\bibitem{IS2016}
A. Iosevich and S. Senger, 
\textit{Sharpness of Falconer’s $\frac{d+1}{2}$ estimate}, 
Ann. Acad. Sci. Fenn. Math. \textbf{41} (2016), no. 2, 713--720. 
% Zbl 1345.28006

\bibitem{LQ2016}
Hongjun Li, Xing Qiu,
\textit{Moments of distance from a vertex to a uniformly distributed random point within arbitrary triangles},
Math. Probl. Eng.,
\textbf{2016} (2016), Article ID 8371750, 10 p.
% Mathematical Problems in Engineering,
% Zbl 1400.60014
% \href{https://downloads.hindawi.com/journals/mpe/2016/8371750.pdf}{https://downloads.hindawi.com/journals/mpe/2016/8371750.pdf}
% http://dx.doi.org/10.1155/2016/8371750

\bibitem{MMP1999}
A. M. Mathai, P. Moschopoulos,  G. Pederzoli, 
\textit{Random points associated with rectangles}, 
Rend. Circ. Mat. Palermo II. Ser. \textbf{48} (1999), no. 1, 163--190. %https://doi.org/10.1007/BF02844387
% Zbl 0949.60028
% \href{https://link.springer.com/article/10.1007/BF02844387}{https://link.springer.com/article/10.1007/BF02844387}
% \href{http://www.math.utep.edu/faculty/moschopoulos/Publications/1999-Random_Points_Associated_With_Rectangles.pdf}{http://www.math.utep.edu/faculty/moschopoulos/Publications/1999-Random\_Points\_Associated\_With\_
% Rectangles.pdf}

% \bibitem{Mo1961}
% B. Z. Moroz, 
% \textit{The distribution of power residues and non-residues}, 
% Vestnik LGU, \textbf{16} no. 19 (1961), 164--169.


\bibitem{OO2015}
D. Oberlin, R. Oberlin, 
\textit{Unit distance problems,} 
Am. J. Math. \textbf{137} (2015), no. 1, 251--270. 
% Zbl 1315.52013

\bibitem{OEISA065493}
The On-line Encyclopedia of Integer Sequences, 
\textit{Sequence A065493}, 
\href{https://oeis.org/A065493}{https://oeis.org/A065493}.

\bibitem{SH2010}
Sunil Srinivasa, Martin Haenggi,
\textit{Distance distributions in finite uniformly random networks: theory and application},
IEEE Transactions on vehicular technology, 
\textbf{59} (2010), no. 2, 940--949.
% \href{https://www3.nd.edu/~mhaenggi/pubs/tvt10.pdf}{https://www3.nd.edu/~mhaenggi/pubs/tvt10.pdf}

%MR0027006 (10,234e) Reviewed
\bibitem{Wei1948}
André Weil, 
\textit{On some exponential sums},
Proc. Nat. Acad. Sci. U.S.A. \textbf{34} (1948), 204--207.
% Zbl 0032.26102

\bibitem{Zah2003}
Alexandru Zaharescu,
\textit{The distribution of the values of a rational function modulo a big prime},
J. Théor. Nombres Bordx.,
% Journal de Théorie des Nombres de Bordeaux, 
\textbf{15} (2003), no. 3, 863--872.
% Zbl 1093.11062

\end{thebibliography}
\end{document}